\documentclass[a4paper,11pt]{amsart}
\usepackage[pdftex]{color,graphicx}
\usepackage{amssymb}
\usepackage{amsfonts}
\usepackage{esint}
\usepackage{amsmath}
\usepackage{amsrefs}
\usepackage{epsfig}
\usepackage{amsthm}
\usepackage{color}
\usepackage[]{epsfig}
\usepackage[]{pstricks}
\usepackage[utf8]{inputenc}

\newtheorem{theorem}{Theorem}[section]
\newtheorem{proposition}{Proposition}[section]
\newtheorem{lemma}{Lemma}[section]
\newtheorem{definition}{Definition}[section]

\newtheorem{corollary}{Corollary}[section]

\newtheorem{remark}{Remark}[section]
\newcommand{\R}{\mathbb{R}}
\newcommand{\h}{\mathbb{H}}
\newcommand{\s}{\mathbb{S}}

\newcommand{\al}{\alpha}
\newcommand{\ria}{\rightarrow}

\newcommand{\om}{\omega}
\newcommand{\n}{\nabla}
\newcommand{\ran}{\rangle}
\newcommand{\lan}{\langle}
\newcommand{\ve}{\varepsilon}


\DeclareMathOperator{\ric}{Ric}

\DeclareMathOperator{\di}{div}
\DeclareMathOperator{\tr}{tr}

\DeclareMathOperator{\diam}{diam}
\DeclareMathOperator{\dist}{dist}
\DeclareMathOperator{\vol}{vol}

\setlength{\textwidth}{16cm} 
\setlength{\leftmargin}{1.2cm} \setlength{\rightmargin}{1.2cm}
\setlength{\oddsidemargin}{0cm}\setlength{\evensidemargin}{0cm}


\setlength{\voffset}{-50pt}
\numberwithin{equation}{section}
\title[Poincar\'e and mean value inequalities for hypersurfaces]{Poincar\'e and mean value inequalities for hypersurfaces in Riemannian manifolds and applications}
\author{Hil\'ario Alencar \and Greg\'orio Silva Neto}

\address{Instituto de Matem\'atica,
Universidade Federal de Alagoas,
Macei\'o, AL, 57072-900, Brazil}
\email{hilario@mat.ufal.br}

\address{Instituto de Matem\'atica,
Universidade Federal de Alagoas,
Macei\'o, AL, 57072-900, Brazil}
\email{gregorio@im.ufal.br}



\begin{document}
\footnotetext{H. Alencar was partially supported by CNPq of Brazil}
\subjclass[2010]{53C21, 53C42}
\begin{abstract}
In the first part of this paper we prove some new Poincar\'e inequalities, with explicit constants, for domains of any hypersurface of a Riemannian manifold with sectional curvatures bounded from above. This inequalities involve the first and the second symmetric functions of the eigenvalues of the second fundamental form of such hypersurface. We apply these inequalities to derive some isoperimetric inequalities and to estimate the volume of domains enclosed by compact self-shrinkers in terms of its scalar curvature. In the second part of the paper we prove some mean value inequalities and as consequences we derive some monotonicity results involving the integral of the mean curvature.
\end{abstract}
\maketitle

\section{Introduction and main results}

The classical Poincar\'e inequality  states that if $\Omega\subset \mathbb{R}^{n}$ is  a bounded, connected, open subset  of $\mathbb{R}^{n}$ and  $1\leq p <\infty,$ then there is a constant $C(p,\Omega)$ depending on $p$ and $\Omega$ such that for all non-negative $f\in W^{1,p}_{0}(\Omega)$ the following inequality holds
\begin{equation}\label{class-poincare}
\left(\int_{\Omega}f^p dx\right)^{\frac{1}{p}}\leq C(p,\Omega)\left(\int_{\Omega}|\n f|^p dx\right)^{\frac{1}{p}},
\end{equation}
where $dx$ denotes the Lebesgue measure of $\R^m.$ If $\Omega=B_r(x_0)$ is the open ball of $\R^m$ with center $x_0\in\R^m$ and radius $r,$ then there is a constant $C(p)$ depending only on $p$ such that 
\[
\int_{B_r(x_0)}f^p dx\leq C(p)r\int_{B_r(x_0)}|\n f|^p dx,
\] 
see \cite[pp. 289-290]{Evans}. The Poincar\'e inequality \eqref{class-poincare} also holds for functions $f\in W^{1,p}(\Omega)$ provided  $\int_{\Omega}fdx=0$, where $\Omega\subset \mathbb{R}^{n}$ is a bounded, connected open subset of $\mathbb{R}^{n}$   with Lipschitz boundary $\partial \Omega$ and for all $1\leq p \leq \infty,$ i.e., there is a constant $C(p,\Omega)$ depending on $p$ and $\Omega$ such that
\begin{equation}\label{class-poincareB}
\left(\int_{\Omega}|f|^{p} dx\right)^{\frac{1}{p}}\leq C(p,\Omega)\left(\int_{\Omega}|\n f|^p dx\right)^{\frac{1}{p}}.
\end{equation} This is the Poincar\'e-Wirtinger inequality.  
An interesting question about these inequalities is to know the  dependence of the Poincar\'{e} constant $C(p,\Omega)$ on the geometry of  the domain $\Omega$ or to find the best constant $C(p,\Omega)$ for a given domain  and a given $p$, $1\leq p\leq\infty$.  For convex domains $\Omega \subset \mathbb{R}^{n}$, Payne-Weinberger showed in \cite{payne-weinberger} that, for  $p=2$, the best Poincar\'{e} constant in inequality \eqref{class-poincareB} is $C(2, \Omega)=\dfrac{1}{\pi}(\diam \Omega),$ and Acosta-Duran showed in \cite[Thm. 3.2]{Acosta} that, for $p=1$, the best Poincar\'e constant is  $C(1, \Omega)=\dfrac{1}{2}(\diam\Omega),$ where here $(\diam\Omega)$ denotes the diameter of $\Omega$ in $\R^n.$ Moreover, they showed that, for any value of $p$,  $1\leq p \leq \infty$, the Poincar\'e constant for convex domains $\Omega \subset \mathbb{R}^{n}$ depends only on the diameter of $\Omega$.
 These Poincar\'e type inequalities can be extended to domains in  Riemannian manifolds. For instance, the following result of P. Li and R. Schoen:
\begin{theorem}[\cite{L-S}, Thm. 1.1, p. 282]
Let $M$ be a compact Riemannian $m$-manifold with boundary $\partial M,$ possibly empty, and with Ricci curvature $\ric_M\geq-(m-1)k$ for a constant $k\geq0.$ Let $x_0\in M$ and $r>0.$ If $\partial M=\emptyset$ assume that the diameter of $M$ is greater than or equal to $2r.$ If $\partial M\neq0,$ assume that the distance from $x_0$ to $\partial M$ is at least $5r.$ For every Lipschitz function $f$ on $B_r(x_0)$ which vanishes on $\partial B_r(x_0)$ we have the Poincar\'e inequality
\[
\int_{B_r(x_0)}|f|dM\leq r(1+\sqrt{k}r)^{-1}e^{2m(1+\sqrt{k}r)}\int_{B_r(x_0)}|\n f|dM.
\]
\end{theorem}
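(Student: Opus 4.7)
The plan is to reduce the problem to a one-dimensional integration along radial geodesics from $x_0$, and then use Bishop--Gromov volume comparison to convert the Ricci lower bound into the explicit constant. First, I would work in geodesic polar coordinates $(s,\theta)\in[0,r]\times S^{m-1}$ centred at $x_0$. The hypotheses on $\diam M$ or $\dist(x_0,\partial M)$ ensure that $B_r(x_0)$ lies in the domain where these coordinates are valid up to a measure-zero cut locus. Writing the Jacobian of $\exp_{x_0}$ as $J(s,\theta)$, Bishop's volume comparison theorem applied to $\ric_M\ge-(m-1)k$ gives, within the cut distance,
\[
J(s,\theta)\le J_k(s):=\left(\frac{\sinh(\sqrt{k}\,s)}{\sqrt{k}}\right)^{m-1}.
\]

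Second, since $f$ is Lipschitz and vanishes on $\partial B_r(x_0)$, along a.e.\ radial geodesic $\gamma_\theta(s)=\exp_{x_0}(s\theta)$ the fundamental theorem of calculus yields $|f(\gamma_\theta(t))|\le\int_t^{r}|\n f(\gamma_\theta(s))|\,ds$. Multiplying by $J(t,\theta)$, integrating over $[0,r]\times S^{m-1}$, and applying Fubini to swap the $s$ and $t$ variables gives
\[
\int_{B_r(x_0)}|f|\,dM\le\int_{S^{m-1}}\int_0^{r}|\n f(\gamma_\theta(s))|\,\Phi(s,\theta)\,ds\,d\theta,\qquad\Phi(s,\theta):=\int_0^{s}J(t,\theta)\,dt.
\]
To convert the right-hand side into a multiple of $\int_{B_r(x_0)}|\n f|\,dM$, I would establish a bound of the form $\Phi(s,\theta)\le r(1+\sqrt{k}r)^{-1}e^{2m(1+\sqrt{k}r)}\,J(s,\theta)$, which, when substituted back and combined with $dM=J(s,\theta)\,ds\,d\theta$, yields the claimed Poincar\'e inequality.

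The decisive step is the bound on $\Phi/J$. The plan would be to write the Jacobi equation for $J(\cdot,\theta)$, use the Ricci lower bound to produce a Riccati-type differential inequality for $J'/J$, and then integrate it against the explicit space-form quantity $\int_0^s J_k(t)\,dt/J_k(s)$, whose computation produces the exponential constant $e^{2m(1+\sqrt{k}r)}$ through an integral of $(m-1)\sqrt{k}\coth(\sqrt{k}\,s)$ on $[0,r]$, with the factor $r(1+\sqrt{k}r)^{-1}$ emerging from the outer $t$-integration and the normalization of $J_k$ at the origin. The main obstacle is exactly here: $J(s,\theta)$ can degenerate near the cut locus, so a naive pointwise comparison $\Phi/J\le C$ is too coarse. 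The resolution is to invoke the monotone ratio from Bishop--Gromov relative volume comparison together with the explicit bounds on $J_k$, and to exploit that along each ray the domain $[0,\min(r,R(\theta))]$ on which the radial inequality is valid is precisely the region picked out by polar coordinates on $B_r(x_0)$, so that the measure-theoretic reconstruction of $\int_{B_r(x_0)}|\n f|\,dM$ loses nothing.
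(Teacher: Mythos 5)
The paper does not supply a proof of this theorem: it quotes it verbatim from Li--Schoen as motivation and moves on, so there is no in-paper argument to compare against. Judged on its own terms, your proposal has two genuine gaps that do not look repairable within the chosen framework. The first is the cut locus of $x_0$. The radial FTC step $|f(\gamma_\theta(t))|\le\int_t^r|\n f(\gamma_\theta(s))|\,ds$ needs the ray $\gamma_\theta(s)=\exp_{x_0}(s\theta)$ to remain minimizing up to a point of $\partial B_r(x_0)$ where $f=0$, but nothing in the hypotheses places $B_r(x_0)$ inside the injectivity radius of $x_0$. On a flat torus $\mathbb{R}^5/\mathbb{Z}^5$ with $r$ slightly above the injectivity radius $1/2$ but below $\tfrac{1}{2}\diam$, the cut distance $L(\theta)$ is strictly less than $r$ in a set of directions of positive (even full) measure, and at $s=L(\theta)$ the function $f$ need not vanish. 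Truncating at $\min(r,L(\theta))$, as you suggest, preserves the measure-theoretic reconstruction of integrals on $B_r(x_0)$ but destroys the FTC, because the boundary term $f(\gamma_\theta(L(\theta)))$ is left uncontrolled.

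The second gap is the pointwise bound $\Phi(s,\theta)\le C\,J(s,\theta)$. Bishop--Gromov says $s\mapsto J(s,\theta)/J_k(s)$ is nonincreasing, i.e.\ it prevents $J$ from growing faster than $J_k$; it gives no lower bound on $J(s,\theta)$, and a Ricci lower bound cannot: $J$ may vanish at a conjugate point inside the ball while $\Phi=\int_0^sJ$ stays bounded away from zero, so $\Phi/J$ is unbounded. What Li--Schoen actually do is integrate not from $x_0$ but from an auxiliary point $y_0$ with $d(x_0,y_0)$ of order $2r$ to $3r$. Minimizing geodesics from $y_0$ to points of $B_r(x_0)$ automatically lie before the cut locus of $y_0$, they cross $\partial B_r(x_0)$ where $f=0$, and on $B_r(x_0)$ the radial parameter from $y_0$ is bounded away from $0$, so the quantity $\int_t^{b}J_k(s)\,ds/J_k(t)$ produced by volume comparison is uniformly controlled. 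This is precisely why the hypotheses demand $\diam M\ge 2r$ (respectively $\dist(x_0,\partial M)\ge 5r$): they guarantee the existence of the reference point and enough room around it. Without relocating the base point in this way, the argument you propose does not close.
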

Poincar\'e inequalities on domains of Riemannian manifolds has been studied extensively by many authors and it plays an important role in Geometry and Analysis, see  \cite{Cheng-Zhou}, \cite{Kombe}, \cite{Lam}, \cite{Li}, \cite{Li-Wang} and \cite{Munteanu} for few examples.

Before to state the results of this paper, let us introduce some definitions and notations.  Let $M^m, \ m\geq 2,$ be a $m-$dimensional hypersurface with boundary $\partial M,$ possibly empty, of a Riemannian $(m+1)-$manifold $\overline{M}^{m+1}.$ Let $k_1,k_2,\ldots,k_m$ be the principal curvatures of $M.$ We define the first and the second symmetric functions associated to the principal curvatures of $M$ by 
\begin{equation}\label{defi-S1-S2}
S_1 = \sum_{i=1}^m k_i\ \mbox{e} \ S_2 = \sum_{i<j}^m k_i k_j.
\end{equation}
These functions have natural geometric meaning. In fact, $S_1=mH,$ where $H$ denotes the mean curvature of $M.$ If $\overline{M}^{m+1}$ has constant sectional curvature $\kappa$, then $2S_2=m(m-1)(R-\kappa),$ where $R$ is the scalar curvature of $M.$

If $K_{\overline{M}}=K_{\overline{M}}(x,\Pi_x)$ denotes the sectional curvature of $\overline{M}$ in $x\in M$ relative to the $2-$ dimensional subspace $\Pi_x\subset T_x\overline{M},$ we define
\[
\displaystyle{\kappa_0(x)=\inf_{\Pi_x\subset T_x\overline{M}}K_{\overline{M}}(x,\Pi_x).}
\]

Let $i(\overline{M})$ be the injectivity radius of $\overline{M}$ and let us denote by $(\diam\Omega)$ the diameter of smallest extrinsic ball which contains $\Omega\subset M.$ 


In this paper we will address Poincar\'e type inequalities, with explicit constants, on domains of any hypersurface of a Riemannian manifold with sectional curvatures bounded from above. 

\begin{theorem}[Poincar\'e type inequality]\label{Theo-Poincare}
Let $\overline{M}^{m+1}$ be a Riemannian $(m+1)-$manifold, $m\geq 2,$ with sectional curvatures bounded from above by a constant $\kappa.$ Let $M$ be a hypersurface of $\overline{M}^{m+1},$ with boundary $\partial M,$ possibly empty, such that $S_1>0$ and $S_2\geq0.$ Let $\Omega\subset M$ be a connected and open domain with compact closure. If $\partial M\neq\emptyset,$ assume in addition that $\overline{\Omega}\cap \partial M=\emptyset.$ If $(\diam\Omega)<2i(\overline{M})$ then, for every non-negative $\mathcal{C}^{1}$-function $f\colon M\ria\R,$  compactly supported in $\Omega,$ we have
\begin{equation}\label{Poincare-ineq}
\int_\Omega fS_1dM \leq C(\Omega)\int_\Omega\left[|\n f|S_1 + \left(\frac{m(\kappa-\kappa_0)}{4} + S_2\right)f\right]dM, 
\end{equation}
where
\[
C(\Omega) = C(1,\Omega)=\left\{
\begin{array}{cc}
\dfrac{1}{m-1}(\diam\Omega)&\ \mbox{if} \ \kappa\leq 0;\\
\dfrac{2}{\sqrt{\kappa}(m-1)}\tan\left(\frac{\sqrt{\kappa}}{2}(\diam\Omega)\right), &\ \mbox{if} \ \kappa>0;
\end{array}
\right.
\]
and we assume $(\diam \Omega) <\frac{\pi}{\sqrt{\kappa}}$ for $\kappa>0.$ Moreover, if $M=\s^m(r)$ is the Euclidean sphere of radius $r,$ $\overline{M}=\R^{m+1},$ and $f$ is a constant function, then the equality holds.
\end{theorem}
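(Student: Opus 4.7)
The plan is to integrate by parts against the tangent vector field $V = P_1(X^T)$ on $M$, where $P_1 = S_1 I - A$ is the first Newton tensor of $M$ and $X = \n^{\overline{M}}\Phi(r)$ is an ambient radial gradient with $r(x) = d_{\overline{M}}(p_0,x)$. The point $p_0 \in \overline{M}$ is chosen as the center of the smallest extrinsic ball containing $\Omega$, so that $r \leq (\diam \Omega)/2$ throughout $\Omega$; the hypothesis $(\diam \Omega) < 2 i(\overline{M})$ (and $(\diam\Omega)<\pi/\sqrt{\kappa}$ when $\kappa>0$) guarantees that $r$ is smooth on the support of $f$. For $\kappa > 0$ I take $\Phi(r) = \int_0^r \phi_\kappa(s)\,ds$, where $\phi_\kappa$ is the Jacobi function $\phi_\kappa'' + \kappa\phi_\kappa=0$, $\phi_\kappa(0)=0$, $\phi_\kappa'(0)=1$; for $\kappa \leq 0$ I take instead the Euclidean model $\Phi(r) = r^2/2$, which remains admissible because $K_{\overline{M}} \leq 0$ already forces $\hess^{\overline{M}}(r^2/2)\geq\bar g$. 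In both cases the Hessian comparison theorem under $K_{\overline{M}} \leq \kappa$ yields $\hess^{\overline{M}}\Phi(r) \geq \phi_\kappa'(r)\,\bar g$, with $\phi_\kappa'(r)\equiv 1$ in the Euclidean model.

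Computing $\di^M V$ requires three ingredients: the tangential decomposition $\langle\n^M_{e_i}X^T, e_j\rangle = \hess^{\overline{M}}\Phi(e_i,e_j) + \langle X,N\rangle A(e_i,e_j)$ obtained by projecting $\n^{\overline{M}}_{e_i}X$ tangentially and using the Weingarten formula; the algebraic identities $\tr(P_1 A) = S_1^2 - |A|^2 = 2 S_2$ and $\tr P_1 = (m-1)S_1$; and the traced Codazzi relation $\di^M P_1 = -(\ric^{\overline{M}}(N,\cdot))^{\sharp}|_{TM}$, which in a non-space-form ambient no longer vanishes. The hypotheses $S_1>0$ and $S_2\geq 0$ force $\sum k_i^2 = S_1^2 - 2S_2 \leq S_1^2$, so $|k_i|\leq S_1$; the eigenvalues $S_1-k_i$ of $P_1$ therefore lie in $[0,2S_1]$, yielding $P_1\geq 0$ and $\|P_1\|_{\mathrm{op}}\leq 2S_1$. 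Since $P_1\geq 0$, the Hessian lower bound gives $\langle P_1,\hess^{\overline{M}}\Phi|_{TM}\rangle \geq \phi_\kappa'(r)\tr P_1$, so that
\[
\di^M V \geq (m-1)\phi_\kappa'(r)\,S_1 + 2 S_2\,\langle X,N\rangle - \ric^{\overline{M}}(N,X^T).
\]
Multiplying by $f \geq 0$, integrating over $\Omega$, and using the compact support of $f$ to discard the boundary term in the integration by parts $\int_\Omega f\,\di^M V\,dM = -\int_\Omega\langle\n f,V\rangle\,dM$, I obtain
\[
(m-1)\int_\Omega f\phi_\kappa'(r)S_1\,dM \leq \int_\Omega|\n f||V|\,dM + 2\int_\Omega fS_2|\langle X,N\rangle|\,dM + \int_\Omega f|\ric^{\overline{M}}(N,X^T)|\,dM.
\]

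The proof concludes by pointwise estimation on $\Omega$. The bounds $|V|\leq 2S_1\phi_\kappa(r)$ and $|\langle X,N\rangle|\leq\phi_\kappa(r)$ follow from $\|P_1\|_{\mathrm{op}}\leq 2S_1$ together with $|X^T|\leq|X|=\phi_\kappa(r)$, while the polarization identity $\ric(u,v) = \tfrac14[\ric(u+v,u+v) - \ric(u-v,u-v)]$ combined with the two-sided bound $\kappa_0\leq K_{\overline{M}}\leq\kappa$ gives the sharp estimate $|\ric^{\overline{M}}(N,X^T)| \leq \tfrac{m(\kappa-\kappa_0)}{2}|X^T|$. Using $r \leq (\diam\Omega)/2$ to replace each $\phi_\kappa(r)$ by $\phi_\kappa((\diam\Omega)/2)$, using monotonicity of $\phi_\kappa'$ on the relevant interval to replace $\phi_\kappa'(r)$ on the left by $\phi_\kappa'((\diam\Omega)/2)$, and dividing by $(m-1)\phi_\kappa'((\diam\Omega)/2)$ produces the stated inequality with $C(\Omega) = \frac{2\phi_\kappa((\diam\Omega)/2)}{(m-1)\phi_\kappa'((\diam\Omega)/2)}$, which specializes to the two announced formulas. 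Equality on $\s^m(r)\subset\R^{m+1}$ with $f$ constant follows by direct verification: $X = r\n r$ is everywhere normal to the sphere, so $X^T\equiv 0$, the gradient and Ricci terms vanish identically, and $\langle X,N\rangle\equiv-r=-(\diam\Omega)/2$ saturates the remaining bound. I expect the main obstacle to be the curvature contribution from the non-space-form ambient: matching the sharp coefficient $\tfrac{m(\kappa-\kappa_0)}{4}$, rather than $\tfrac{m(\kappa-\kappa_0)}{2}$, requires both the polarization bound on $\ric^{\overline{M}}$ and the halving $r\leq(\diam\Omega)/2$ afforded by choosing $p_0$ as the \emph{extrinsic} ball center, which is also what makes the bound tight on $\s^m(r)$.
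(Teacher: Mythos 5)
Your argument is correct and follows essentially the same route as the paper's proof: the same radial field $\overline{X}=\overline{\n}\Phi(\rho)$ anchored at the extrinsic incenter of $\Omega$, the same divergence identity for $P_1(X^T)$ built from the traced Codazzi equation and the tangential decomposition, the same bounds $0\leq P_1\leq 2S_1I$ and $|\ric_{\overline{M}}|\leq \tfrac{m(\kappa-\kappa_0)}{2}$ via polarization, the same integration by parts, and $\rho\leq(\diam\Omega)/2$. The only difference is cosmetic: you package the key lower bound as Hessian comparison $\hess^{\overline{M}}\Phi\geq\phi_\kappa'(\rho)\,\overline{g}$ traced against $P_1\geq 0$, whereas the paper's Lemma~\ref{lemma-main1} carries out the equivalent computation explicitly by decomposing a $P_1$-eigenbasis along and orthogonal to the minimizing geodesic.
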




\begin{remark}\label{rem.inj}
{\normalfont 
If $\kappa\leq0$ and $\overline{M}$ is complete and simply connected, then $i(\overline{M})=\infty.$ If $\kappa>0$ and the sectional curvatures of $\overline{M}$ are pinched between $\frac{1}{4}\kappa$ and $\kappa,$ then $i(\overline{M})\geq\frac{\pi}{\sqrt{\kappa}},$ see for example \cite{dC}, p.$276.$ For these situations the assumption $(\diam\Omega)\leq 2i(\overline{M})$ in Theorem \ref{Theo-Poincare} is automatically satisfied. 
}
\end{remark}

\begin{remark}\label{rem.seo}
{\normalfont
Results in the direction of the Theorem \ref{Theo-Poincare} for mean curvature are known, see \cite[Theorem 3.1 and Theorem 3.3, p. 531-532]{Seo}.
}
\end{remark}

In the particular case when $\overline{M}^{m+1}$ is one of the space forms $\R^{m+1},$ $\h^{m+1}(\kappa),$ or $\s^{m+1}(\kappa),$ namely, the Euclidean space, the hyperbolic space of curvature $\kappa<0,$ and the Euclidean sphere of curvature $\kappa>0,$ respectively, we have

\begin{corollary}
Let $M$ be a hypersurface with boundary $\partial M,$ possibly empty, of $\R^{m+1},$ $\h^{m+1}(\kappa)$ or $\s^{m+1}(\kappa),$  $m\geq 2,$ with mean curvature $H>0$ and scalar curvature $R\geq \kappa.$ Let $\Omega\subset M$ be a connected and open domain with compact closure. If $\partial M\neq\emptyset,$ assume in addition that $\overline{\Omega}\cap \partial M=\emptyset.$ Then, for every non-negative $\mathcal{C}^{1}$-function $f\colon M\ria\R,$ compactly supported in $\Omega,$ we have
\begin{equation}
\int_\Omega fHdM \leq C(\Omega)\int_\Omega\left[|\n f|H + \frac{(m-1)}{2}(R-\kappa)f\right]dM.
\end{equation}
Moreover, if $M=\s^m(r)$ is the Euclidean sphere of radius $r$ in $\R^{m+1}$ and $f$ is a constant function, then the equality holds.
\end{corollary}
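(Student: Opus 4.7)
The plan is to derive the corollary as a direct specialization of Theorem \ref{Theo-Poincare}, exploiting the fact that in a space form all sectional curvatures are constant, which kills one of the curvature terms on the right-hand side of \eqref{Poincare-ineq}.

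First I would verify that the hypotheses of Theorem \ref{Theo-Poincare} are met. The ambient $\overline{M}^{m+1}$ is $\R^{m+1}$, $\h^{m+1}(\kappa)$ or $\s^{m+1}(\kappa)$, so its sectional curvatures are all equal to $0$, $\kappa<0$, or $\kappa>0$ respectively; in particular they are bounded from above by $\kappa$ (with $\kappa=0$ in the Euclidean case). Since these spaces are complete, simply connected, and of constant curvature, Remark \ref{rem.inj} gives $i(\overline{M})=\infty$ when $\kappa\leq 0$ and $i(\overline{M})\geq\pi/\sqrt{\kappa}$ when $\kappa>0$, so the assumption $(\diam\Omega)<2i(\overline{M})$ is automatic in the first two cases and reduces to the standard $(\diam\Omega)<\pi/\sqrt{\kappa}$ condition of Theorem \ref{Theo-Poincare} in the spherical case. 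The positivity of $S_1$ follows from $H>0$ via $S_1=mH$, and the hypothesis $R\geq\kappa$ together with the identity $2S_2=m(m-1)(R-\kappa)$ ensures $S_2\geq 0$.

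Next, because $\overline{M}$ has constant sectional curvature $\kappa$, one has $\kappa_0(x)\equiv\kappa$, so the term $\tfrac{m(\kappa-\kappa_0)}{4}$ appearing in \eqref{Poincare-ineq} vanishes identically. Substituting $S_1=mH$ and $S_2=\tfrac{m(m-1)}{2}(R-\kappa)$ into Theorem \ref{Theo-Poincare} yields
\begin{equation*}
m\int_\Omega fH\,dM \leq C(\Omega)\int_\Omega\left[m|\n f|H + \tfrac{m(m-1)}{2}(R-\kappa)f\right]dM,
\end{equation*}
and dividing through by $m$ gives exactly the desired inequality.

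Finally, for the equality statement, I would observe that $M=\s^m(r)\subset\R^{m+1}$ is a totally umbilical hypersurface with $\kappa=0$, $\kappa_0=0$, and all principal curvatures equal to $1/r$, so both sides of \eqref{Poincare-ineq} reduce to the same quantity on a constant function $f$ (by the corresponding equality case already established in Theorem \ref{Theo-Poincare}), and this equality is preserved after dividing by $m$. There is no real obstacle here; the whole argument is an algebraic substitution, with the only subtle point being the verification that the injectivity-radius hypothesis is satisfied in the spherical case, which is handled by Remark \ref{rem.inj}.
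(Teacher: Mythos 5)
Your proof is correct and takes essentially the same route the paper intends: specialize Theorem \ref{Theo-Poincare} using that $\kappa_0\equiv\kappa$ in a space form (so the Ricci term drops), then substitute $S_1=mH$ and $2S_2=m(m-1)(R-\kappa)$ and cancel the factor $m$. The paper omits an explicit proof, treating the corollary as an immediate consequence exactly as you describe.
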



Theorem \ref{Theo-Poincare} has various immediate consequences that we will present below. The next result should be compared with \cite[Theorem 28.2.5, p. 210]{burago} which states that

\emph{
If $M^m$ is a compact submanifold with angles of $\R^n,\ n>m,$ with boundary $\partial M,$ possibly empty, then
\[
m \vol(M) \leq R_M \left[\vol(\partial M)+m\int_M|H|dM\right],
\]
where $R_M$ is the radius of the smallest ball of $\R^n$ which contains $M.$
}

As a consequence of Theorem \ref{Theo-Poincare} we have 


\begin{corollary}\label{isoperimetric}
Let $\overline{M}^{m+1}$ be a Riemannian $(m+1)-$manifold, $m\geq 2,$ with sectional curvatures bounded from above by a constant $\kappa.$ Let $M$ be a hypersurface of $\overline{M}^{m+1},$ with boundary $\partial M,$ possibly empty, such that $S_1>0$ and $S_2\geq0.$ Let $\Omega\subset M$ be a connected and open domain with compact closure. If $\partial M\neq\emptyset,$ assume in addition that $\overline{\Omega}\cap \partial M=\emptyset.$ If $(\diam\Omega)<2i(\overline{M})$ and, for $\kappa>0,$ assuming also $(\diam\Omega)<\frac{\pi}{\sqrt{\kappa}},$ then
\[
\int_\Omega S_1dM \leq C(\Omega)\left[\int_{\partial\Omega}S_1dS_\Omega +\int_\Omega \left(\frac{m(\kappa-\kappa_0)}{4} + S_2 \right)dM\right],
\]
where $dS_\Omega$ denotes the $(m-1)-$dimensional measure of $\partial\Omega.$ Moreover, if $M$ is compact without boundary, then
\[
\int_M S_1dM \leq C(\Omega)\int_M \left(\frac{m(\kappa-\kappa_0)}{4}+S_2\right)dM.
\]
In particular, if $\overline{M}$ has constant sectional curvature $\kappa,$ then
\[
\int_\Omega  H dM \leq C(\Omega)\left[\int_{\partial\Omega} H dS_\Omega +\frac{m-1}{2}\int_\Omega (R-\kappa)dM\right].
\]

\end{corollary}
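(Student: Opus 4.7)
The plan is to deduce all three inequalities directly from Theorem \ref{Theo-Poincare} by appropriate choices of test function. For the first inequality, the idea is to apply \eqref{Poincare-ineq} to a sequence of non-negative $\mathcal{C}^{1}$ cutoffs $f_\varepsilon$ that approach the characteristic function $\chi_\Omega$, with gradients concentrated in a thin collar around $\partial\Omega$. Concretely, letting $\rho(x)=\dist_M(x,\partial\Omega)$ and fixing a smooth $\eta\colon[0,\infty)\to[0,1]$ with $\eta(0)=0$ and $\eta\equiv 1$ on $[1,\infty)$, I would take (a mollification of) $f_\varepsilon=\eta(\rho/\varepsilon)$. Then $f_\varepsilon$ is non-negative, compactly supported in $\Omega$, and $|\n f_\varepsilon|\leq\|\eta'\|_\infty/\varepsilon$ on the collar $\{0<\rho<\varepsilon\}$ and vanishes elsewhere.

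Letting $\varepsilon\to 0$ in Theorem \ref{Theo-Poincare} applied to $f_\varepsilon$, the two zero-order integrals converge by monotone convergence to $\int_\Omega S_1\,dM$ and to $\int_\Omega\bigl(\tfrac{m(\kappa-\kappa_0)}{4}+S_2\bigr)\,dM$ respectively. For the gradient term, using $|\n\rho|=1$ almost everywhere together with the co-area formula and the change of variables $s=t/\varepsilon$,
\[
\int_\Omega |\n f_\varepsilon|\, S_1\,dM \;=\; \int_0^1 \eta'(s)\left(\int_{\rho^{-1}(\varepsilon s)}S_1\,dS\right) ds \;\longrightarrow\; \int_{\partial\Omega}S_1\,dS_\Omega,
\]
since the inner integrals depend continuously on the level value for small $t$ (tubular neighbourhood theorem) and $\int_0^1\eta'(s)\,ds=1$. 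Assembling the three limits gives the first inequality.

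For the compact case $M$ without boundary, I would simply take $f\equiv 1$ on $M$, which is trivially compactly supported; the gradient term vanishes and the second inequality drops out immediately. For the final particular case, in a space form $\overline{M}$ of constant sectional curvature $\kappa$ one has $\kappa_0\equiv\kappa$, so the term $\frac{m(\kappa-\kappa_0)}{4}$ disappears, and substituting the identities $S_1=mH$ and $2S_2=m(m-1)(R-\kappa)$ recorded after \eqref{defi-S1-S2}, then dividing through by $m$, produces the stated inequality. The only real technical point is justifying the co-area limit rigorously, since $\rho$ is only Lipschitz; this is handled routinely by mollifying $\rho$ on a one-sided tubular neighbourhood of $\partial\Omega$ to obtain genuinely $\mathcal{C}^{1}$ test functions, so I expect this to be the main technical rather than conceptual obstacle.
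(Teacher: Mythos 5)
Your approach coincides with the paper's: the authors also prove the first inequality by substituting into Theorem \ref{Theo-Poincare} the cutoff $f_\ve(x)=\min\{1,\dist(x,\partial\Omega)/\ve\}$ (a particular instance of your $\eta(\rho/\ve)$) and letting $\ve\ria0$, obtain the compact case by $f\equiv1$, and specialize the space-form case via $\kappa_0=\kappa$, $S_1=mH$, $2S_2=m(m-1)(R-\kappa)$. Your remark about mollifying $\rho$ to get a genuinely $\mathcal{C}^1$ test function is a sensible clarification, since the paper simply feeds the Lipschitz cutoff directly into the inequality.
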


If $M$ is a compact, without boundary, hypersurface of $\R^{m+1},$ $\h^{m+1}(\kappa)$ or $\s^{m+1}(\kappa),$ the Corollary \ref{isoperimetric} becomes

\begin{corollary}\label{iso-2}
Let $M$ be a compact, without boundary, hypersurface of $\R^{m+1},\ \h^{m+1}(\kappa)$ and $\s^{m+1}(\kappa)$ with mean curvature $H>0$ and scalar curvature $R\geq \kappa.$ 
\begin{itemize}
\item[(i)] If $M\subset\R^{m+1}$ or $M\subset\h^{m+1}(\kappa),$ then 
\[
\int_M H dM \leq \frac{1}{2}(\diam M)\int_M (R-\kappa)dM
\]
and
\[
\diam M\geq \dfrac{2\min H}{\max R - \kappa}.
\]
In particular, if $m=2$ and $M^2\subset\R^3,$ then
\[
\int_M H dM\leq 2\pi(\diam M).
\]
\item[(ii)] If $M\subset \s^{m+1}(\kappa)$ and $(\diam M)\leq \frac{\pi}{\sqrt{\kappa}},$ then
\[
\int_M H dM \leq \frac{1}{\sqrt{\kappa}}\tan\left(\frac{\sqrt{\kappa}}{2}(\diam M)\right)\int_M (R-\kappa)dM.
\]
\end{itemize}
\end{corollary}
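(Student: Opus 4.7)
The plan is to deduce Corollary \ref{iso-2} directly from Corollary \ref{isoperimetric} by specializing to the space form setting and then simplifying using the geometric meaning of $S_1$ and $S_2$. Since $\overline{M}$ is one of the space forms $\R^{m+1}$, $\h^{m+1}(\kappa)$, or $\s^{m+1}(\kappa)$, the ambient sectional curvature is constant and equal to $\kappa$, so $\kappa_0 \equiv \kappa$ throughout $M$. Hence the term $\tfrac{m(\kappa - \kappa_0)}{4}$ in Corollary \ref{isoperimetric} vanishes identically. Using $S_1 = mH$ and the Gauss equation $2S_2 = m(m-1)(R-\kappa)$ in the compact-without-boundary form of that corollary yields
\[
\int_M mH\,dM \leq C(\Omega)\int_M \frac{m(m-1)}{2}(R-\kappa)\,dM,
\]
and dividing by $m$ gives $\int_M H\,dM \leq \tfrac{m-1}{2}C(\Omega)\int_M(R-\kappa)\,dM$. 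The assumptions $H>0$ and $R\geq \kappa$ are exactly what is needed to invoke the hypotheses $S_1>0$ and $S_2\geq 0$ of Corollary \ref{isoperimetric}.

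Next I would substitute the explicit value of $C(\Omega)$ in each case. For $M\subset \R^{m+1}$ or $M\subset \h^{m+1}(\kappa)$ the upper curvature bound is $\kappa\leq 0$, so $C(\Omega) = \tfrac{1}{m-1}(\diam M)$, giving
\[
\int_M H\,dM \leq \frac{1}{2}(\diam M)\int_M(R-\kappa)\,dM,
\]
which is the first inequality of part (i). For $M\subset\s^{m+1}(\kappa)$ with $(\diam M) \leq \pi/\sqrt{\kappa}$, we have $C(\Omega)=\tfrac{2}{\sqrt{\kappa}(m-1)}\tan\bigl(\tfrac{\sqrt{\kappa}}{2}(\diam M)\bigr)$, and the factor $\tfrac{m-1}{2}$ cancels to yield part (ii). In both cases the injectivity radius condition of Corollary \ref{isoperimetric} is automatic (see Remark \ref{rem.inj}).

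The diameter lower bound in part (i) is a one-line consequence of the inequality just obtained: since $H>0$ and $R-\kappa\geq 0$,
\[
(\min_M H)\vol(M) \leq \int_M H\,dM \leq \frac{1}{2}(\diam M)(\max_M(R-\kappa))\vol(M),
\]
and solving for $\diam M$ gives $\diam M \geq \tfrac{2\min H}{\max R - \kappa}$.

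Finally, for the closing statement with $m=2$ and $M^2\subset\R^3$, I would apply the first inequality of (i) with $\kappa=0$, giving $\int_M H\,dM \leq \tfrac{1}{2}(\diam M)\int_M R\,dM$. In the paper's convention $2S_2 = m(m-1)(R-\kappa)$ reduces to $S_2 = R$, and since $S_2 = k_1 k_2 = K$ is the Gaussian curvature of the induced metric, the scalar curvature coincides with $K$ here. The assumption $R\geq 0$ forces $K\geq 0$ on a closed orientable surface in $\R^3$, which by Gauss--Bonnet and the classification of closed surfaces forces $\chi(M)=2$ (a closed flat torus cannot be isometrically immersed in $\R^3$). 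Hence $\int_M R\,dM = \int_M K\,dM = 2\pi\chi(M) = 4\pi$, and the bound becomes $\int_M H\,dM \leq 2\pi(\diam M)$. The main (and only mild) obstacle is this last topological point: ruling out $\chi(M) = 0$ requires the standard Hilbert/Efimov-type observation that a closed intrinsically flat surface does not admit a smooth isometric immersion into $\R^3$, and everything else is direct substitution.
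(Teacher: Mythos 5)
Your proof is correct and takes exactly the route the paper intends: specialize Corollary \ref{isoperimetric} to a compact boundaryless hypersurface in a space form, apply $S_1 = mH$, $2S_2 = m(m-1)(R-\kappa)$, and $\kappa_0=\kappa$, then substitute the explicit constant $C(\Omega)$ (and take $\min H$, $\max(R-\kappa)$ for the diameter bound). For the closing $m=2$ case your invocation of a Hilbert/Efimov-type rigidity result to force $\chi(M)=2$ is superfluous: Gauss--Bonnet together with the trivial fact $\chi(M)\leq 2$ for a closed orientable surface already yield $\int_M R\,dM = 2\pi\chi(M)\leq 4\pi$, which is all the inequality $\int_M H\,dM\leq 2\pi(\diam M)$ requires.
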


\begin{remark}
{\normalfont
If $M=\s^{m}(r),$ the round spheres of radius $r$ in the Euclidean space $\R^{m+1},$ then the inequalities of the item (i) of Corollary \ref{iso-2} become equality.
}
\end{remark}
\begin{remark}
{\normalfont
The proof of the item (i) of the Corollary \ref{isoperimetric} follows taking the compactly supported Lipschitz function $f_\ve:M\ria\R,$ given by
\[
f_\ve(x)=\left\{
\begin{array}{cll}
1&\mbox{if}& x\in\Omega, \dist(x,\partial\Omega)\geq \ve;\\
\dfrac{\dist(x,\partial\Omega)}{\ve}& \mbox{if}& x\in\Omega, \dist(x,\partial\Omega)\leq \ve;\\
0& \mbox{if}& x\not\in\Omega;
\end{array}
\right.
\]
into the Poincar\'e inequality (\ref{Poincare-ineq}) of Theorem \ref{Theo-Poincare} and by taking $\ve\ria0,$ where $\dist(x,\partial\Omega)$ denotes the intrinsic distance of $M$ of the point $x\in M$ to the boundary $\partial M$ of $M.$ 
}
\end{remark}

A hypersurface $M\subset\R^{m+1}$ is called a self-shrinker if its mean curvature $H$ satisfies $H=-\frac{1}{2m}\lan \overline{X},\eta\ran,$ where $\overline{X}$ is the position vector of $\R^{m+1}$ and $\eta$ is the normal vector field of $M$. Self-shrinkers play an important role in the study of mean curvature flow and they have been extensively studied in recent years, see  \cite{cao-zhou}, \cite{cold-mini-1}, and \cite{cold-mini-2}. Another consequence of our version of  Poincar\'e inequality is the following result.

\begin{corollary}\label{self-shrinker}
Let $M$ be a compact, without boundary, self-shrinker of $\R^{m+1},$ $m\geq 2,$ with mean curvature $H>0$ and scalar curvature $R\geq0.$ Let $K\subset\R^{m+1}$ be the compact domain such that $M=\partial K.$ Then
\begin{equation}\label{self}
\vol(K)\leq \frac{m}{m+1}(\diam M)\int_M R dM.
\end{equation}
Moreover, if $M=\s^m(\sqrt{2m})$ is the Euclidean sphere of radius $\sqrt{2m},$ then the equality holds.
\end{corollary}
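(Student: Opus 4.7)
The plan is to combine Corollary~\ref{iso-2}(i) with the divergence theorem and the self-shrinker equation. The output of Corollary~\ref{iso-2}(i) gives an inequality with $\int_M H\,dM$ on the left, and the remaining job is to convert that integral into a multiple of $\vol(K)$ via the classical fact that a self-shrinker's total mean curvature is proportional to the enclosed volume.

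\emph{Step 1.} Apply Corollary~\ref{iso-2}(i) in the Euclidean case ($\kappa=0$) to $M$. All hypotheses are in place ($M$ is compact without boundary, $H>0$, and $R\geq 0=\kappa$), yielding
\[
\int_M H\, dM \leq \tfrac{1}{2}(\diam M)\int_M R\, dM.
\]

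\emph{Step 2.} Apply the divergence theorem to the Euclidean position vector field $\overline{X}$ on $K$. Since $\di \overline{X}=m+1$, one obtains $(m+1)\vol(K) = \int_M \langle \overline{X},\nu\rangle\, dM$, where $\nu$ is the outward unit normal of $K$ along $M=\partial K$. The self-shrinker equation $H=-\tfrac{1}{2m}\langle\overline{X},\eta\rangle$ together with the assumption $H>0$ forces $\langle\overline{X},\eta\rangle<0$, which (consistently with the model $\s^m(\sqrt{2m})$) identifies $\eta$ with the inward unit normal of $K$, i.e.\ $\nu=-\eta$. Consequently $\langle\overline{X},\nu\rangle=2mH$ pointwise on $M$, and integrating gives
\[
\int_M H\, dM = \tfrac{m+1}{2m}\vol(K).
\]

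\emph{Step 3.} Substituting this identity into the inequality of Step~1 produces $\frac{m+1}{2m}\vol(K)\leq \frac{1}{2}(\diam M)\int_M R\, dM$, which is exactly \eqref{self}. For the equality clause, on $M=\s^m(\sqrt{2m})$ one has $R=1/(2m)$, $H=1/\sqrt{2m}$, $\diam M=2\sqrt{2m}$, and $\vol(K)=\frac{\sqrt{2m}}{m+1}\vol(M)$, so both sides of \eqref{self} reduce to $\frac{\sqrt{2m}}{m+1}\vol(M)$; moreover, both ingredients (Corollary~\ref{iso-2}(i) and the divergence identity) attain equality on this round sphere.

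The argument is essentially routine given Corollary~\ref{iso-2}(i); the only point requiring genuine care is the orientation identification $\nu=-\eta$ in Step~2, which is determined unambiguously by the sign of $H$ in the self-shrinker equation. No substantial obstacle arises, since the analytic content is already packed into the Poincar\'e inequality upstream.
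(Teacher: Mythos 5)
Your proof is correct and follows essentially the same route as the paper: compute $\int_M H\,dM = \frac{m+1}{2m}\vol(K)$ via the self-shrinker equation and the divergence theorem, then feed this into the Euclidean case of Corollary~\ref{iso-2}(i). The paper compresses these two steps into one sentence and (apparently by a small typographical slip) cites the wrong item number, but the argument is identical.
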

The proof follows observing that using divergence theorem,
\[
\int_M H dM =\int_{\partial K}\frac{1}{2m}\lan\overline{X},-\eta\ran dM = \frac{1}{2m}\int_K \di_{\R^{m+1}}\overline{X}dV = \frac{m+1}{2m}\vol(K)
\]
and then replacing the identity above into the item (ii) of Corollary \ref{isoperimetric}, for $\kappa=\kappa_0=0$ and $2S_2=m(m-1)R$. The Corollary \ref{self-shrinker} can be compared with Corollary 1.2, p. 4718 of \cite{Guan-Li}, which proves an estimate of the volume of a convex domain in the Euclidean space by the integral of the curvatures of its boundary.

Let $\overline{M}^{m+1}$ be a Hadamard manifold, i.e., a complete and simply connected Riemannian manifold with non-positive sectional curvatures. By the Remark \ref{rem.inj}, $i(\overline{M})=\infty.$ If $M$ is a compact hypersurface of $\overline{M}^{m+1}$, without boundary, then using the well known D. Hoffman and J. Spruck's Sobolev inequality, see \cite[Thm. 2.1, p. 716]{H-S}, we obtain an estimate for the volume of $M$. 

\begin{corollary}
Let $\overline{M}^{m+1}, m\geq 2,$ be a Hadamard manifold with sectional curvatures bounded from above by a constant $\kappa\leq 0.$ Let $M$ be a compact hypersurface of $\overline{M}^{m+1},$ without boundary, such that $S_1>0$ and $S_2\geq 0.$ Then
\begin{equation}\label{vol.est}
\vol(M)^{\frac{m-1}{m}}\leq \frac{2^{m-1}(m+1)^{1+1/m}}{m(m-1)^2\om_m^{1/m}}(\diam M)\int_M \left(\dfrac{m(\kappa-\kappa_0)}{4}+S_2\right)dM,
\end{equation}
where $\om_m$ denotes the volume of the Euclidean unit sphere.
\end{corollary}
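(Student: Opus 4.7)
The plan is simply to chain two already-available ingredients: the compact-case of Corollary \ref{isoperimetric} and the Hoffman--Spruck Sobolev inequality \cite{H-S} for submanifolds of a Hadamard manifold. Since $\overline{M}$ is Hadamard with $\kappa\leq 0$, Remark \ref{rem.inj} gives $i(\overline{M})=\infty$, so the diameter hypothesis of Corollary \ref{isoperimetric} is automatically satisfied. Applying its compact-without-boundary conclusion, with $C(M)=\tfrac{1}{m-1}(\diam M)$ because $\kappa\leq 0$, yields
\[
\int_M S_1\,dM \leq \frac{\diam M}{m-1}\int_M\left(\frac{m(\kappa-\kappa_0)}{4}+S_2\right)dM.
\]

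Next, I would invoke the Hoffman--Spruck inequality on $M\subset\overline{M}$. Since $M$ is compact without boundary, the constant test function $f\equiv 1$ is admissible; and because $H=S_1/m>0$ we may replace $|H|$ by $H$, obtaining
\[
\vol(M)^{\frac{m-1}{m}} \leq c_{HS}(m)\int_M H\,dM = \frac{c_{HS}(m)}{m}\int_M S_1\,dM,
\]
with the explicit constant $c_{HS}(m)=\dfrac{2^{m-1}(m+1)^{1+1/m}}{(m-1)\,\omega_m^{1/m}}$ coming from \cite[Thm.~2.1]{H-S}. Combining this with the previous display gives
\[
\vol(M)^{\frac{m-1}{m}} \leq \frac{c_{HS}(m)}{m(m-1)}(\diam M)\int_M\left(\frac{m(\kappa-\kappa_0)}{4}+S_2\right)dM,
\]
and substituting the value of $c_{HS}(m)$ produces precisely the coefficient $\dfrac{2^{m-1}(m+1)^{1+1/m}}{m(m-1)^2\,\omega_m^{1/m}}$ appearing in \eqref{vol.est}.

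The argument is essentially a two-line combination of existing results, so no serious obstacle is expected. The only points that deserve careful verification are (i) that the hypotheses of Hoffman--Spruck (in particular any implicit bound on the volume of the support of the test function) are indeed met for $f\equiv 1$ on a closed hypersurface of a Hadamard manifold with $\kappa\leq 0$, and (ii) that the numerical constant from \cite{H-S}, divided by $m(m-1)$, matches the coefficient stated in \eqref{vol.est}; both are routine bookkeeping rather than conceptual difficulties.
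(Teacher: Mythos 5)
Your proposal is correct and is essentially the same two-step argument the paper gives: combine the compact-case conclusion of Corollary \ref{isoperimetric} (applicable because $i(\overline{M})=\infty$ on a Hadamard manifold) with the Hoffman--Spruck Sobolev inequality applied to $f\equiv 1$, using $|H|=H=S_1/m$. One small remark on the bookkeeping you flagged: the paper's own proof displays the Hoffman--Spruck constant as $\dfrac{2^{m-2}(m+1)^{1+1/m}}{(m-1)\omega_m^{1/m}}$, yet its final line (and the statement of the corollary) carries $2^{m-1}$; there is therefore a factor-of-$2$ inconsistency internal to the paper. Your value $c_{HS}(m)=\dfrac{2^{m-1}(m+1)^{1+1/m}}{(m-1)\omega_m^{1/m}}$ is the one that reproduces the constant appearing in \eqref{vol.est}, so your chain of inequalities is internally consistent with the stated conclusion even if it disagrees with the paper's intermediate display. (Also note the paper's display has the exponent $\tfrac{m}{m-1}$ on $\vol(M)$ where it should be $\tfrac{m-1}{m}$; you used the correct exponent.)
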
 
In fact, by using the Hoffman-Spruck-Sobolev inequality
\[
\left[\int_M f^{\frac{m}{m-1}}dM\right]^{\frac{m}{m-1}}\leq\frac{2^{m-2}(m+1)^{1+1/m}}{(m-1)\om_m^{1/m}}\int_M[|\n f| + f|H|]dM
\]
for $f\equiv 1,$ in addition with the Corollary \ref{isoperimetric}, we have
\[
\begin{split}
\vol(M)^{\frac{m}{m-1}}&\leq\frac{2^{m-2}(m+1)^{1+1/m}}{m(m-1)\om_m^{1/m}}\int_M S_1dM\\
&\leq \frac{2^{m-1}(m+1)^{1+1/m}}{m(m-1)^2\om_m^{1/m}}(\diam M)\int_M \left(\dfrac{m(\kappa-\kappa_0)}{4}+S_2\right)dM.\\
\end{split}
\]

\subsection{Mean value inequalities} The techniques used in the proof of the Poincar\'e inequalities can be extended to prove some mean value inequalities involving both $S_1$ and $S_2.$ Results in this direction are known for hypersurfaces of the Euclidean space. In this case, holds 

\begin{theorem}[\cite{colding-minicozzi}, Lemma 1.18] Let $M\subset\R^{m+1}$ be a properly immersed hypersurface with mean curvature $H$ and $f$ is a non-negative function on $M,$ then for $s<t,$
\begin{equation}\label{eqCM1}
\frac{1}{t^m}\int_{M\cap B_t} f dM - \frac{1}{s^m}\int_{M\cap B_s} f dM \geq \int_s^t\frac{1}{r^{m+1}}\int_{M\cap B_r}\lan \overline{X},\n f + fH\eta\ran dM dr,
\end{equation}
where $\overline{X}$ is the position vector of $\R^{m+1}$ and $\eta$ is the normal vector field of $M.$\end{theorem}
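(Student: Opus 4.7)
The plan is to derive a differential inequality for $I(r):=\int_{M\cap B_r} f\,dM$ by combining the divergence theorem on $M\cap B_r$ with the coarea formula, and then to integrate it from $s$ to $t$.

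First, using a local orthonormal tangent frame $\{e_i\}$ on $M$ and the flat identity $\overline{\n}_{e_i}\overline{X}=e_i$, one establishes
\[
\di_M(\overline{X}^T) \;=\; m + H\lan \overline{X},\eta\ran,
\]
where $\overline{X}^T$ denotes the tangential component of the position vector (with $H$ and $\eta$ normalized as in the statement). For $f\in C^1$ this yields
\[
\di_M(f\overline{X}^T) \;=\; mf + \lan \overline{X},\n f + fH\eta\ran,
\]
since $\n f$ is tangential and $\lan \n f,\overline{X}\ran = \lan \n f,\overline{X}^T\ran$. By Sard's theorem applied to $|\overline{X}|^2$ on $M$, for a.e.\ $r>0$ the set $M\cap\partial B_r$ is a smooth hypersurface of $M$ and the outward conormal to $M\cap B_r$ along it is $\overline{X}^T/|\overline{X}^T|$. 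Integrating the above identity over $M\cap B_r$ and applying Stokes yields
\[
\int_{M\cap\partial B_r} f|\overline{X}^T|\,dS \;=\; mI(r) + J(r), \qquad J(r):=\int_{M\cap B_r}\lan \overline{X},\n f + fH\eta\ran\,dM.
\]

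Next, since $|\n_M|\overline{X}||=|\overline{X}^T|/r$ on $M\cap\partial B_r$, the coarea formula gives $I'(r) = \int_{M\cap\partial B_r}\frac{fr}{|\overline{X}^T|}\,dS$. Combining with the elementary bound $|\overline{X}^T|\le r$ on $\partial B_r$ and the hypothesis $f\ge 0$,
\[
rI'(r) \;=\; \int_{M\cap\partial B_r}\frac{fr^2}{|\overline{X}^T|}\,dS \;\ge\; \int_{M\cap\partial B_r} f|\overline{X}^T|\,dS \;=\; mI(r)+J(r),
\]
which rearranges to
\[
\frac{d}{dr}\!\left[\frac{I(r)}{r^m}\right] \;\ge\; \frac{J(r)}{r^{m+1}}.
\]
Integrating from $s$ to $t$ produces exactly \eqref{eqCM1}.

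The one step to execute with care is the divergence identity for $\overline{X}^T$ together with the correct identification of the outward conormal on $\partial(M\cap B_r)$, for which Sard's theorem is invoked to restrict to generic radii (and hence to extend the resulting inequality to all $r$ by continuity/absolute continuity of $I$). The assumption $f\ge 0$ is essential precisely in the comparison $fr^2/|\overline{X}^T|\ge f|\overline{X}^T|$, which is what fixes the direction of the final differential inequality; everything else is routine.
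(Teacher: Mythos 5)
Your proof is correct, and it is exactly the argument behind Colding–Minicozzi's Lemma 1.18, which this paper merely cites; more to the point, it is the special case (Euclidean ambient space, Newton tensor $P_1$ replaced by the identity, $S_1$ replaced by $m$) of the strategy the paper itself uses to prove Theorem~\ref{Theo-Mean}: compute $\di_M$ of a vector field built from the position/distance vector, integrate over $M\cap B_r$, use Stokes on the boundary term, convert the boundary integral to $I'(r)$ via the coarea formula, exploit $|\overline{X}^T|\le r$ together with $f\ge 0$ to get a first-order differential inequality for $I(r)/r^{m}$, and integrate from $s$ to $t$. One small bookkeeping caveat worth keeping in mind: the formula $\di_M(\overline{X}^T) = m + H\langle\overline{X},\eta\rangle$ uses the Colding–Minicozzi convention in which $H$ denotes the trace (sum of principal curvatures); in this paper's own notation that quantity is $S_1=mH$, so when specializing Theorem~\ref{Theo-Mean} back to $\R^{m+1}$ one must be careful not to lose the factor of $m$.
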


In the same spirit of inequality (\ref{eqCM1}), we obtain

\begin{theorem}[Mean Value Inequalities]\label{Theo-Mean} Let $\overline{M}^{m+1},\ m\geq2,$ be a Riemannian $(m+1)-$manifold with sectional curvatures bounded from above by a constant $\kappa.$ Let $M$ be a proper hypersurface of $\overline{M}$ with $S_1>0$ and $S_2\geq 0.$ Let $x_0$ be a point of $\overline{M}^{m+1},$ $\rho(x)=\rho(x_0,x)$ be the extrinsic distance function starting at $x_0$ to $x\in M,$ and $B_r=B_r(x_0)$ be the extrinsic ball of center $x_0$ and radius $r.$ If $\partial M\neq 0,$ assume that $B_r\cap\partial M=\emptyset.$ Then, for any non-negative, locally integrable, $\mathcal{C}^1$-function $f:M\ria\R$ and for any $0<s<t<i(\overline{M}),$ we have
\begin{itemize}
\item[(i)] for $\kappa\leq 0,$
\[
\begin{split}
\dfrac{1}{t^{\frac{m-1}{2}}}& \int_{M\cap B_t}fS_1dM - \dfrac{1}{s^{\frac{m-1}{2}}}\int_{M\cap B_s}fS_1dM\\
&\geq \dfrac{1}{2}\int_s^t \dfrac{1}{r^{\frac{m+1}{2}}}\int_{M\cap B_r}\rho\left[\left\lan\overline{\n}\rho,\left(S_1I-A\right)(\n f) +2S_2f\eta\right\ran + f\ric_{\overline{M}}(\n\rho,\eta)\right]dM dr;
\end{split}
\]
\item[(ii)] for $\kappa> 0,$
\[
\begin{split}
&\dfrac{1}{(\sin\sqrt{\kappa} t)^{\frac{m-1}{2}}} \int_{M\cap B_t}fS_1dM - \dfrac{1}{(\sin\sqrt{\kappa} s)^{\frac{m-1}{2}}}\int_{M\cap B_s}fS_1dM\\
&\qquad\geq \dfrac{1}{2}\int_s^t\!\!\!\!\frac{1}{(\sin\sqrt{\kappa}r)^{\frac{m+1}{2}}}\int_{M\cap B_r}\!\!\!\!\!\!(\sin\sqrt{\kappa}\rho)\left[\left\lan\!\overline{\n}\rho,\left(S_1I-A\right)\!(\n f) +2S_2f\eta\right\ran + f\ric_{\overline{M}}(\n\rho,\eta)\right]dM dr,
\end{split}
\]
provided $t<\dfrac{\pi}{2\sqrt{\kappa}}.$
\end{itemize}

Here $\ric_{\overline{M}}$ denotes the Ricci tensor of $\overline{M},$ $A:TM\ria TM$ denotes the linear operator associated with the second fundamental form of $M$, defined in the tangent bundle $TM$ of $M,$ and $I:TM\ria TM$ denotes the identity operator.
\end{theorem}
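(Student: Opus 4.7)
The strategy parallels the classical Colding--Minicozzi mean-value argument, with the Laplacian replaced by the second-order operator $L_1 g = \tr(P_1 \hess_M g)$ associated to the first Newton transformation $P_1 = S_1 I - A$. Write $\psi(t) = t$ for $\kappa \le 0$ and $\psi(t) = \sin(\sqrt{\kappa}\,t)$ for $\kappa > 0$; the hypothesis $K_{\overline{M}} \le \kappa$ yields the Hessian comparison $\hess_{\overline{M}}\rho \ge (\psi'/\psi)(\overline{g} - d\rho \otimes d\rho)$. The first step is to compute the divergence of the tangent vector field $V = \psi(\rho) f P_1(\n\rho)$ on $M$. Combining the Gauss formula $\hess_M \rho = \hess_{\overline{M}}\rho|_{TM} + \lan \overline{\n}\rho, \eta\ran A$, the Codazzi identity (which in the non-flat ambient produces a term $\lan \di P_1, X\ran$ proportional to $\ric_{\overline{M}}(X, \eta)$ for $X \in TM$), the identity $\tr(P_1 A) = S_1^2 - |A|^2 = 2 S_2$, and the Hessian comparison above together with $P_1 \ge 0$ (established below), one derives
\[
\psi(\rho) L_1 \rho \;\ge\; \psi'(\rho)\bigl[(m-1) S_1 - \lan P_1(\n\rho), \n\rho\ran\bigr] + 2 S_2\, \psi(\rho) \lan \overline{\n}\rho, \eta\ran.
\]
Substituting into the product-rule expansion of $\di_M V$, the contributions $\pm \psi'(\rho) f \lan P_1(\n\rho), \n\rho\ran$ cancel, leaving
\[
\di_M V \;\ge\; (m-1) \psi'(\rho) f S_1 + \psi(\rho) \bigl[\lan \overline{\n}\rho,\, P_1(\n f) + 2 S_2 f \eta\ran + f \ric_{\overline{M}}(\n\rho, \eta)\bigr].
\]

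Integrating over $M \cap B_r$ and applying the divergence theorem (since $V$ is tangent to $M$ and $\overline{B_r} \cap \partial M = \emptyset$), with outer conormal $\nu = \n\rho/|\n\rho|$ on $M \cap \partial B_r$, and replacing $\int_{M \cap B_r} \psi'(\rho) f S_1 \, dM$ by the lower bound $\psi'(r) F(r)$ (valid since $\psi'$ is constant for $\kappa \le 0$ and non-increasing on $[0, \pi/(2\sqrt{\kappa}))$ for $\kappa > 0$, which is where the restriction $t < \pi/(2\sqrt{\kappa})$ enters), one obtains
\[
\psi(r) \int_{M \cap \partial B_r} \frac{f \lan P_1(\n\rho), \n\rho\ran}{|\n\rho|}\, dS \;\ge\; (m-1) \psi'(r) F(r) + K(r),
\]
where $F(r) = \int_{M \cap B_r} f S_1\, dM$ and $K(r)$ denotes the bulk integrand appearing in the statement. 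To convert the left-hand side into $F'(r)$, I use an algebraic lemma: $S_1 > 0$ and $S_2 \ge 0$ imply $k_i \ge -S_1$ for every principal curvature, hence $\lambda_{\max}(P_1) \le 2 S_1$. Indeed, writing $k_{\min} = -a$ with $a > 0$ (otherwise the conclusion is trivial) and $T = \sum_{j \ne \min} k_j = S_1 + a$, $S_2 \ge 0$ gives $\sigma_2 := \sum_{\substack{i < j\\ i,j \ne \min}} k_i k_j \ge aT$, and the elementary inequality $2 \sigma_2 \le T^2$ yields $aT \le T^2/2$, hence $a \le T/2 = (S_1+a)/2$, i.e.\ $a \le S_1$. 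Together with $P_1 \ge 0$ (which likewise follows from $S_1 > 0$ and $S_2 \ge 0$) and $|\n\rho| \le 1$, this gives $\lan P_1(\n\rho), \n\rho\ran / |\n\rho| \le 2 S_1 |\n\rho| \le 2 S_1 / |\n\rho|$, so by coarea $F'(r) = \int_{M \cap \partial B_r} f S_1/|\n\rho|\, dS$ the boundary integral above is at most $2 F'(r)$.

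Combining the last two displays yields $2\psi(r) F'(r) \ge (m-1) \psi'(r) F(r) + K(r)$, which rearranges to
\[
\frac{d}{dr}\!\left[\frac{F(r)}{\psi(r)^{(m-1)/2}}\right] \;\ge\; \frac{K(r)}{2\,\psi(r)^{(m+1)/2}},
\]
and integrating from $s$ to $t$ yields both (i) and (ii).

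The delicate point is the pointwise computation in the first paragraph: the Gauss formula, the Codazzi identity, and the Hessian comparison must interact so that the indefinite-sign terms containing $\lan P_1(\n\rho), \n\rho\ran$ cancel \emph{exactly}, which is what makes the right-hand side feature $(m-1) S_1$ rather than the polluted quantity $(m-1) S_1 - \lan P_1(\n\rho), \n\rho\ran$. The algebraic lemma $\lambda_{\max}(P_1) \le 2 S_1$ is then what produces the precise exponent $(m-1)/2$ and the factor $\tfrac{1}{2}$ on the right of the statement; the naive trace bound $\lan P_1 X, X\ran \le (m-1) S_1 |X|^2$ would only yield the strictly weaker exponent $m-1$.
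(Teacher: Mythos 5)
Your argument is correct and tracks the paper's proof step by step: the same vector field $\psi(\rho)fP_1(\n\rho)$, the same Codazzi/divergence identity (the paper's Proposition \ref{prop-div-P1-const}), the same Hessian-comparison estimate exploiting $P_1\ge0$ to dispose of the $\lan P_1\n\rho,\n\rho\ran$ term (the paper's Lemma \ref{lemma-main1}, which does this by decomposing the eigenbasis along and orthogonal to $\gamma'$ rather than invoking the Gauss formula for $\hess_M\rho$, though the content is identical), the same spectral bound $\lambda_{\max}(P_1)\le 2S_1$ (the paper's Lemma \ref{lemma-est-P1}, derived there more simply from $|A|^2=S_1^2-2S_2\le S_1^2$), and the same co-area/ODE rearrangement. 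The only slip is the closing aside: the trace bound $\lan P_1X,X\ran\le(m-1)S_1|X|^2$ used on the boundary term would produce exponent $1$, not $m-1$, and in this setup a larger exponent is the stronger monotonicity statement --- the paper's Remark \ref{convex case} obtains exponent $m-1$ in the convex case $A\ge0$ precisely because $\theta_i\le S_1$ is then a sharper bound than $2S_1$; this does not affect the validity of your proof.
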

We apply the mean value inequalities of the Theorem \ref{Theo-Mean} to obtain some monotonicity results involving the integral of the mean curvature. Monotonicity results appear in several branches of Analysis and Riemannian Geometry in the study to determine the variational behaviour of geometric quantities, see for example \cite{walcy}, \cite{colding-minicozzi}, \cite{colding}, \cite{cold-mini}, \cite{Ecker1}, \cite{Ecker2}, \cite{Gruter}, \cite{Li-JF}, \cite{Simon}, and \cite{Urbas}.


\begin{corollary}[Monotonicity]\label{monotonicity} Let $\overline{M}^{m+1},\ m\geq2,$ be a Riemannian $(m+1)-$manifold with sectional curvatures bounded from above by a constant $\kappa.$ Let $M$ be a proper hypersurface of $\overline{M}$ with $S_1>0$ and $S_2\geq 0.$ If there exists $0<\alpha\leq1, \ \Lambda\geq0$ and $0<R_0<i(\overline{M})$ such that
\begin{equation}\label{hyp-mono-geral}
\al^{-1}\int_{M\cap B_r}\left(\dfrac{m(\kappa-\kappa_0)}{4} + S_2\right)dM\leq\Lambda\left(\dfrac{r}{R_0}\right)^{\al-1}\int_{M\cap B_r} S_1 dM,
\end{equation}
for all $r\in(0,R_0),$ then
\begin{itemize}
\item[(i)] for $\kappa\leq0,$ the function $h:(0,R_0)\ria\R$ defined by
\[
h(r)=\dfrac{\exp(\Lambda R_0^{1-\al}r^\al)}{r^{\frac{m-1}{2}}}\int_{M\cap B_r} S_1 dM
\]
is monotone non-decreasing;

\item[(ii)] for $\kappa>0$ and $\kappa R_0^2\leq\pi^2,$ the function $h:(0,R_0)\ria\R$ defined by
\[
h(r)=\dfrac{\exp(\Lambda R_0^{1-\al}r^\al)}{(\sin\sqrt{\kappa}r)^{\frac{m-1}{2}}}\int_{M\cap B_r} S_1dM
\]
is monotone non-decreasing.
\end{itemize}
\end{corollary}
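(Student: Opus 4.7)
The plan is to specialize Theorem \ref{Theo-Mean} to $f\equiv 1$, bound the resulting integrand below by a constant multiple of $\tfrac{m(\kappa-\kappa_0)}{4}+S_2$ so that hypothesis \eqref{hyp-mono-geral} can be applied, and then turn the resulting Gronwall-type integral inequality into the monotonicity of $h$ via the exponential integrating factor $\exp(\Lambda R_0^{1-\alpha}r^\alpha)$.

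I first set $f\equiv 1$ in Theorem \ref{Theo-Mean}. Since $\n f=0$, the $(S_1 I-A)(\n f)$ term vanishes and the right-hand integrand reduces to $\rho\bigl[2S_2\lan \overline{\n}\rho,\eta\ran + \ric_{\overline{M}}(\n\rho,\eta)\bigr]$. Cauchy--Schwarz applied to the first summand gives $2S_2\lan\overline{\n}\rho,\eta\ran\geq -2S_2$. For the Ricci summand, I expand $\ric_{\overline{M}}(\n\rho,\eta)=\sum_i\overline{R}(\n\rho,e_i,e_i,\eta)$ in an orthonormal frame of $\overline{M}$ adapted to $M$ (so that $e_{m+1}=\eta$ and $e_1=\n\rho/|\n\rho|$); the $i=1$ and $i=m+1$ terms vanish by the symmetries of $\overline{R}$, and each of the remaining $m-1$ mixed components of the ambient curvature tensor is controlled by $(\kappa-\kappa_0)/2$ via a Berger-type polarization (comparing sectional curvatures along $e_i\wedge(e_1\pm\eta)$). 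Summing gives the pointwise estimate $\ric_{\overline{M}}(\n\rho,\eta)\geq -\tfrac{m(\kappa-\kappa_0)}{2}$, so that
\[
2S_2\lan \overline{\n}\rho,\eta\ran + \ric_{\overline{M}}(\n\rho,\eta)\geq -2\!\left(\tfrac{m(\kappa-\kappa_0)}{4}+S_2\right).
\]

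Setting $V(r):=\int_{M\cap B_r}S_1\,dM$ and, in case (i), $\phi(r):=V(r)/r^{(m-1)/2}$, and using $\rho\leq r$ on $M\cap B_r$, Theorem \ref{Theo-Mean}(i) becomes
\[
\phi(t)-\phi(s)\geq -\int_s^t\frac{1}{r^{(m-1)/2}}\int_{M\cap B_r}\!\left(\tfrac{m(\kappa-\kappa_0)}{4}+S_2\right)dM\,dr\geq -\int_s^t\alpha\Lambda\!\left(\frac{r}{R_0}\right)^{\!\alpha-1}\!\phi(r)\,dr,
\]
where the second step invokes \eqref{hyp-mono-geral}. Writing $A(r):=\Lambda R_0^{1-\alpha}r^\alpha$, so that $A'(r)=\alpha\Lambda(r/R_0)^{\alpha-1}$, this integral inequality is the Gronwall form of $(e^{A(r)}\phi(r))'\geq 0$, which integrates (since $V$ is absolutely continuous on $(0,R_0)$ by the coarea formula) to the asserted monotonicity of $h=e^A\phi$ in part (i). Part (ii) is proved identically after replacing $r^{(m-1)/2}$ by $(\sin\sqrt{\kappa}r)^{(m-1)/2}$ throughout and using $\sin\sqrt{\kappa}\rho\leq\sin\sqrt{\kappa}r$ on $M\cap B_r$; the hypothesis $\kappa R_0^2\leq\pi^2$ ensures the sines remain positive on $(0,R_0)$.

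The main obstacle is the pointwise Ricci lower bound in the first step: the exact constant $\tfrac{m(\kappa-\kappa_0)}{2}$ must be extracted from the curvature pinch so that after combining with $2S_2$ the integrand is dominated by $2\bigl(\tfrac{m(\kappa-\kappa_0)}{4}+S_2\bigr)$, matching the quantity controlled by hypothesis \eqref{hyp-mono-geral}. Once this pointwise estimate is in hand, the monotonicity of $h$ reduces to a direct Gronwall/integrating-factor manipulation.
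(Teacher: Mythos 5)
Your proof is correct and follows essentially the same route as the paper's: set $f\equiv 1$ in Theorem \ref{Theo-Mean}, bound $2S_2\lan\overline{\n}\rho,\eta\ran+\ric_{\overline{M}}(\n\rho,\eta)\geq -2\bigl(\tfrac{m(\kappa-\kappa_0)}{4}+S_2\bigr)$ (your Ricci estimate is exactly the content of Remark \ref{remark-ricci}, which the paper proves by polarizing the symmetric bilinear form rather than by your frame expansion, but the constant is the same), invoke \eqref{hyp-mono-geral}, and run the integrating-factor/Gronwall argument. Your observation that $V(r)=\int_{M\cap B_r}S_1\,dM$ is absolutely continuous by the coarea formula is, if anything, slightly more careful than the paper, which only invokes Lebesgue a.e.\ differentiability of a monotone function before integrating the differential inequality.
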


In particular, if $\overline{M}=\R^{m+1}$ or $\s^{m+1}(\kappa)$ we have, taking $\alpha=1$ in the Corollary \ref{monotonicity}, the following result.
\begin{corollary}[Monotonicity] Let $M$ be a proper hypersurface of $\R^{m+1}$ or $\s^{m+1}(\kappa), \ m\geq2,$ with mean curvature $H>0$ and scalar curvature $R\geq\kappa.$ If there exists $\Lambda\geq0$ such that 
\begin{equation}\label{hyp.mono}
\kappa\leq R\leq \frac{\Lambda}{2}H+\kappa,
\end{equation}
then
\begin{itemize}
\item[(i)] for $M^m\subset\R^{m+1},$ the function $\varphi:(0,\infty)\subset\R\ria\R,$ defined by
\[
\varphi(r)=\dfrac{e^{\frac{\Lambda}{2}r}}{r^{\frac{m-1}{2}}}\int_{M\cap B_r}HdM
\]
is monotone non-decreasing;
\item[(ii)] for $M^m\subset\s^{m+1}(\kappa),$ the function $\varphi:\left(0,\frac{\pi}{2\sqrt{\kappa}}\right)\ria\R$ defined by
\[
\varphi(r)=\dfrac{e^{\frac{\Lambda}{2}r}}{(\sin\sqrt{\kappa}r)^{\frac{m-1}{2}}}\int_{M\cap B_r}HdM
\]
is monotone non-decreasing.
\end{itemize}

\end{corollary}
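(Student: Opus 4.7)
The plan is to deduce this corollary directly from Corollary \ref{monotonicity} by specializing to $\alpha = 1$ and verifying that the pointwise hypothesis \eqref{hyp.mono} implies the integral hypothesis \eqref{hyp-mono-geral} of the general monotonicity result. The structural assumptions $S_1 > 0$ and $S_2 \geq 0$ that feed into Corollary \ref{monotonicity} follow from $H > 0$ and $R \geq \kappa$ respectively, once one uses the dictionary between the symmetric functions and the intrinsic geometry of $M$.

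First I would observe that both $\R^{m+1}$ and $\s^{m+1}(\kappa)$ have constant sectional curvature $\kappa$, so $\kappa_0(x) \equiv \kappa$ on $M$ and the term $\frac{m(\kappa - \kappa_0)}{4}$ in \eqref{hyp-mono-geral} drops out. Next, I would use the identities $S_1 = mH$ and $2S_2 = m(m-1)(R - \kappa)$ (both recorded right after \eqref{defi-S1-S2}) to rewrite the pointwise inequality $R - \kappa \leq \frac{\Lambda}{2} H$ as $S_2 \leq c\, S_1$, where $c$ is an explicit constant depending only on $m$ and $\Lambda$. Integrating this pointwise bound over $M \cap B_r$ gives the integral inequality required by Corollary \ref{monotonicity} in the case $\alpha = 1$.

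With this verified, I would invoke Corollary \ref{monotonicity}(i) in the Euclidean case (where $\kappa = 0 \leq 0$ and $i(\R^{m+1}) = \infty$, which lets us take $R_0 \to \infty$ in the monotonicity domain) and Corollary \ref{monotonicity}(ii) in the spherical case (using $\kappa R_0^2 \leq \pi^2$, equivalent to $R_0 \leq \pi/\sqrt{\kappa}$, which is respected by the domain $(0, \pi/(2\sqrt{\kappa}))$ of $\varphi$). This yields the monotone non-decreasing character of a function of the form $\frac{\exp(c r)}{r^{(m-1)/2}} \int_{M \cap B_r} S_1\, dM$, respectively $\frac{\exp(c r)}{(\sin\sqrt{\kappa}r)^{(m-1)/2}} \int_{M \cap B_r} S_1\, dM$. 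Dividing through by the positive constant $m$ and using $S_1 = mH$ converts $\int S_1\, dM$ into $\int H\, dM$, producing exactly the functions $\varphi$ appearing in the statement (with the constant $\Lambda$ absorbed into the exponent as described in \eqref{hyp.mono}).

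There is essentially no geometric obstacle here; the entire analytic content sits inside Corollary \ref{monotonicity}. The only point requiring care is the bookkeeping of the constant that converts the pointwise bound $R - \kappa \leq \tfrac{\Lambda}{2} H$ into the integral bound $\int S_2\, dM \leq c \int S_1\, dM$, and the subsequent renaming of constants to match the exponent $\tfrac{\Lambda}{2} r$ displayed in the statement. Since the involved positive constants can be freely relabelled (they enter the monotonicity conclusion only through a positive multiplicative factor on $\Lambda$) and since multiplying a non-decreasing, non-negative function by any non-decreasing positive function preserves monotonicity, no deeper argument is needed to pass from the form delivered by Corollary \ref{monotonicity} to the form stated here.
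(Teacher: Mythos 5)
Your strategy is exactly the paper's: the paper proves this corollary simply by ``taking $\alpha = 1$'' in Corollary~\ref{monotonicity}, after noting that $\R^{m+1}$ and $\s^{m+1}(\kappa)$ have constant curvature, so $\kappa_0 \equiv \kappa$ and the term $\frac{m(\kappa - \kappa_0)}{4}$ vanishes. The problem lies in the claim that ``constants can be freely relabelled'' and ``no deeper argument is needed.'' Carrying out the substitution you defer: the pointwise hypothesis $R - \kappa \leq \frac{\Lambda}{2}H$, combined with $S_1 = mH$ and $2S_2 = m(m-1)(R-\kappa)$, gives $S_2 \leq \frac{(m-1)\Lambda}{4}S_1$. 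So the integral hypothesis \eqref{hyp-mono-geral} with $\alpha = 1$ holds with the constant $\frac{(m-1)\Lambda}{4}$ in place of the $\Lambda$ appearing there, and Corollary~\ref{monotonicity} then delivers monotonicity of $\exp\bigl(\frac{(m-1)\Lambda}{4}r\bigr)\,r^{-(m-1)/2}\int_{M\cap B_r}S_1\,dM$.

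This matches the displayed exponent $\frac{\Lambda}{2}r$ only when $m = 3$. For $m = 2$ the resulting constant $\frac{\Lambda}{4}$ is smaller, and the ``multiply by the increasing positive function $e^{(\Lambda/2-\Lambda/4)r}$'' trick you invoke does upgrade to the stated form, since $\int_{M\cap B_r}S_1\,dM \geq 0$. But for $m \geq 4$ one has $\frac{(m-1)\Lambda}{4} > \frac{\Lambda}{2}$, and the trick runs the wrong way: monotonicity of $e^{\Lambda' r}g(r)$ with $g \geq 0$ does not imply monotonicity of $e^{\Lambda'' r}g(r)$ when $\Lambda'' < \Lambda'$. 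The paper's one-line derivation leaves the same constant unexamined, so your route is faithful to the source; but the assertion that the bookkeeping is immaterial is not correct. A careful writeup must either carry the factor $\frac{m-1}{2}$ through (so the true exponent obtained from Corollary~\ref{monotonicity} is $\frac{(m-1)\Lambda}{4}r$), or observe explicitly that the exponent $\frac{\Lambda}{2}r$ as printed is only justified for $m \leq 3$.
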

\begin{remark}
{\normalfont 
For the case when $\overline{M}=\h^{m+1}(\kappa),$ results were obtained in \cite{A-S-arkiv}.
}
\end{remark}

In the next application of the mean value inequalities we study the behaviour of the integral of the mean curvature when we assume $L^p$ bounds for the scalar curvature.

\begin{corollary}\label{Lp}
Let $\overline{M}^{m+1},\ m\geq2,$ be a Riemannian $(m+1)-$manifold with sectional curvatures bounded from above by a constant $\kappa.$ Let $M$ be a proper hypersurface of $\overline{M}$ with $S_1\geq c$ for some constant $c>0$ and $S_2\geq 0.$ If there exists $0<R_0<i(\overline{M}), \Lambda>0$ and $p>1$ such that
\begin{equation}
\left[\int_{M\cap B_{R_0}} \left(\dfrac{m(\kappa-\kappa_0)}{4} + S_2 \right)^p dM\right]^{1/p}\leq\Lambda,
\end{equation}
then for every $0<s<t<R_0,$ we have
\begin{itemize}
\item[(i)] for $\kappa\leq 0,$
\[
\left(\dfrac{1}{s^{\frac{m-1}{2}}}\int_{M\cap B_s}S_1dM\right)^{1/p}\leq\left(\dfrac{1}{t^{\frac{m-1}{2}}}\int_{M\cap B_t}S_1dM\right)^{1/p} + \frac{2\Lambda}{c^{1-1/p}(m-1-2p)}\int_s^t\frac{1}{r^{\frac{m-1}{2p}}}dr;
\]
\item[(ii)] for $\kappa>0$ and $R_0\leq\dfrac{\pi}{2\sqrt{\kappa}},$
\[
\begin{split}
\left(\dfrac{1}{(\sin \sqrt{\kappa}s)^{\frac{m-1}{2}}}\int_{M\cap B_s}S_1dM\right)^{1/p}&\leq\left(\dfrac{1}{(\sin \sqrt{\kappa}t)^{\frac{m-1}{2}}}\int_{M\cap B_t}S_1dM\right)^{1/p}\\
&\qquad\qquad + \frac{\Lambda(m-1)}{c^{1-1/p}(m-1-2p)}\int_s^t\frac{1}{(\sin \sqrt{\kappa}r)^{\frac{m-1}{2p}}}dr.
\end{split}
\]
\end{itemize}
\end{corollary}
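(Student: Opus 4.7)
The plan is to apply Theorem \ref{Theo-Mean} with the constant test function $f\equiv 1$, bound the resulting right-hand side using the $L^p$ hypothesis and the pointwise lower bound $S_1\geq c$, and then reduce the two-parameter integral inequality to an ODE for $\Phi(r)^{1/p}$, where $\Phi(r):=r^{-(m-1)/2}\int_{M\cap B_r}S_1\,dM$ in case (i) (with $r$ replaced by $\sin\sqrt\kappa\,r$ in case (ii)). I describe case (i); case (ii) is entirely parallel.

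With $f\equiv 1$ the $\nabla f$ contribution vanishes from the bracket of Theorem \ref{Theo-Mean}(i), leaving $2S_2\langle\overline{\nabla}\rho,\eta\rangle+\ric_{\overline{M}}(\nabla\rho,\eta)$. Using $|\langle\overline{\nabla}\rho,\eta\rangle|\leq 1$ together with the polarization estimate $\ric_{\overline{M}}(X,Y)\geq -m(\kappa-\kappa_0)$ valid for orthonormal $X,Y$ (which follows from $\kappa_0\leq K_{\overline{M}}\leq\kappa$ applied to $\ric(X\pm Y,X\pm Y)$), this bracket is bounded below by $-C\bigl(\tfrac{m(\kappa-\kappa_0)}{4}+S_2\bigr)$ for an absolute constant $C$ (one may take $C=4$). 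Since $\rho\leq r$ on $M\cap B_r$, the factor $\rho$ is bounded by $r$.

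Applying H\"older's inequality with conjugate exponents $p,\,p/(p-1)$ to the inner integral, and combining the $L^p$ hypothesis with $\vol(M\cap B_r)\leq c^{-1}\int_{M\cap B_r}S_1\,dM$ (a consequence of $S_1\geq c$), the right-hand side reduces, after the powers of $r$ telescope to $-(m-1)/(2p)$, to
\[
\Phi(t)-\Phi(s)\geq -K\int_s^t r^{-(m-1)/(2p)}\,\Phi(r)^{1-1/p}\,dr,
\]
for an explicit $K$ depending on $c,\Lambda,m,p$. Setting $\psi:=\Phi^{1/p}$, so that $\Phi'=p\psi^{p-1}\psi'$, the factor $\Phi^{1-1/p}=\psi^{p-1}$ cancels cleanly from the differentiated form, giving $\psi'(r)\geq -(K/p)\,r^{-(m-1)/(2p)}$, which integrates on $[s,t]$ to the stated comparison. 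Case (ii) proceeds identically after replacing $r$ by $\sin\sqrt\kappa\,r$ throughout and using monotonicity of $\sin$ on $(0,\pi/2)$ to bound $\sin\sqrt\kappa\,\rho\leq \sin\sqrt\kappa\,r$ on $M\cap B_r$.

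The principal technical obstacle is matching the explicit constants in the statement. The prefactors $2\Lambda/[c^{1-1/p}(m-1-2p)]$ in (i) and $\Lambda(m-1)/[c^{1-1/p}(m-1-2p)]$ in (ii) contain the denominator factor $(m-1-2p)$, which is not produced by the generic ODE reduction above but must emerge from an alternate organization of the argument (for instance an integration by parts in $r$) so that the explicit integration of $r^{-(m-1)/(2p)}$ contributes the factor $1/\bigl(1-(m-1)/(2p)\bigr)=-2p/(m-1-2p)$. The analogous treatment in case (ii) should yield the numerator $(m-1)$ via the trigonometric analogue $(\sin\sqrt\kappa\,r)'=\sqrt\kappa\cos\sqrt\kappa\,r$ appearing at the corresponding differentiation step.
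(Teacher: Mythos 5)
Your approach is the same as the paper's: apply the differential form of the mean value inequality (with $f\equiv 1$ so that the $P_1(\nabla f)$ term drops), bound the remaining bracket pointwise, invoke H\"older with exponents $p$ and $p/(p-1)$ together with $\vol(M\cap B_r)\leq c^{-1}\int_{M\cap B_r}S_1\,dM$, and then pass to an ODE for $\Phi^{1/p}$ where the $\Phi^{1-1/p}$ factor cancels. That is exactly what the paper does.

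Two remarks. First, your Ricci bound is looser than necessary: the polarization estimate in the paper (Remark \ref{remark-ricci}) gives $|\ric_{\overline{M}}(\overline V,\overline W)|\leq \tfrac{m(\kappa-\kappa_0)}{2}$ for orthonormal vectors, not $m(\kappa-\kappa_0)$, so the bracket $2S_2\langle\overline\nabla\rho,\eta\rangle+\ric_{\overline{M}}(\nabla\rho,\eta)$ is bounded below by $-2\bigl(\tfrac{m(\kappa-\kappa_0)}{4}+S_2\bigr)$, i.e.\ with $C=2$ rather than $C=4$. Second, and more importantly, you were misled by the constants displayed in the Corollary: you should trust your own derivation. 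Running the argument you sketch (or the paper's own, which is identical) produces the clean coefficient $\dfrac{\Lambda}{p\,c^{1-1/p}}$ in both cases, exactly as appears at the end of the paper's proof. The factors $\dfrac{2\Lambda}{c^{1-1/p}(m-1-2p)}$ and $\dfrac{\Lambda(m-1)}{c^{1-1/p}(m-1-2p)}$ in the statement are apparently typos: they look like what you would obtain by evaluating $\int_s^t r^{-(m-1)/(2p)}\,dr$ in closed form, yet the integral is still present in the statement, and moreover $(m-1-2p)<0$ for $m=2,3$ (indeed whenever $p>\tfrac{m-1}{2}$), which would make the coefficient negative and the inequality vacuous in the wrong direction. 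There is no hidden ``alternate organization'' or integration-by-parts step to recover those constants; your ODE reduction already gives the correct result.
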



We conclude the applications of the mean value inequalities with the following monotonicity result for self-shrinkers.

\begin{corollary}\label{teo-self}
Let $M$ be a proper self-shrinker of $\R^{m+1},$ $m\geq2,$ with mean curvature $H>0$ and scalar curvature $0 \leq R \leq \Lambda$ for some constant $\Lambda\geq0.$ Then the function $\varphi:(0,\infty)\ria\R$ defined by
\[
\varphi(r)=\frac{1}{r^{(m-1)\left(\frac{1}{2} - m\Lambda\right)}}\int_{M\cap B_r} H dM
\]
is monotone non-decreasing. Moreover, if $M$ is complete, non-compact and the scalar curvature satisfies $0\leq R \leq \Lambda <\frac{1}{2m},$ then $\displaystyle{\int_M H dM =\infty.}$
\end{corollary}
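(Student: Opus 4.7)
The plan is to specialize Theorem \ref{Theo-Mean}(i) to $\kappa=0$, $f\equiv 1$, and $x_0=0\in \R^{m+1}$, and then convert the resulting estimate into a one-variable Grönwall-type inequality for $I(r):=\int_{M\cap B_r} H\, dM$ using the self-shrinker identity $\lan \overline{X},\eta\ran = -2mH$. With $f\equiv 1$ the term $(S_1I-A)(\n f)$ vanishes, and since $\overline{M}=\R^{m+1}$ is flat, $\ric_{\overline{M}}\equiv 0$. Choosing the center $x_0=0$ yields $\rho\,\overline{\n}\rho=\overline{X}$, so the right-hand side of Theorem \ref{Theo-Mean}(i) collapses to
\[
\frac{1}{2}\int_s^t\frac{1}{r^{(m+1)/2}}\int_{M\cap B_r} 2S_2\lan \overline{X},\eta\ran\, dM\, dr.
\]

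Next I would use the self-shrinker identity together with the space-form relation $2S_2=m(m-1)R$ (valid since $\kappa=0$) to rewrite the inner integrand as $-2m^2(m-1)HR$, and then the hypotheses $H>0$ and $R\leq \Lambda$ give $-HR\geq -\Lambda H$. After cancelling a common factor of $m=\tfrac{S_1}{H}$ and setting $G(r):=I(r)/r^{(m-1)/2}$, the mean value inequality reduces to
\[
G(t)-G(s)\geq -m(m-1)\Lambda\int_s^t \frac{G(r)}{r}\, dr,\qquad 0<s<t.
\]
Setting $a=m(m-1)\Lambda$ and applying the integrating factor $r^a$ shows that $r^a G(r)$ is non-decreasing. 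Rearranging exponents gives $r^aG(r)=r^{-(m-1)(1/2-m\Lambda)}I(r)=\varphi(r)$, which proves the monotonicity statement.

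For the second part, since $M$ is non-compact and $H>0$ pointwise, there exists $r_0>0$ with $\varphi(r_0)>0$. When $\Lambda<1/(2m)$ the exponent $(m-1)(1/2-m\Lambda)$ is strictly positive, so monotonicity yields
\[
I(r)= r^{(m-1)(1/2-m\Lambda)}\varphi(r)\geq r^{(m-1)(1/2-m\Lambda)}\varphi(r_0)\longrightarrow\infty
\]
as $r\to\infty$, which forces $\int_M H\, dM=\infty$.

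The only delicate point is passing from the integral Grönwall inequality to the monotonicity of $r^a G(r)$: because $I$ is monotone in $r$ and (by the coarea formula applied to $\rho|_M$) absolutely continuous, one differentiates almost everywhere to obtain $G'(t)+(a/t)G(t)\geq 0$ a.e., and then the fundamental theorem of calculus gives that $r^a G(r)$ is indeed non-decreasing. Everything else is routine bookkeeping of constants.
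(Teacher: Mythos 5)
Your proof is correct and follows essentially the same route as the paper: specialize the mean value inequality to $f\equiv 1$, use flatness of $\R^{m+1}$ so the Ricci term vanishes, rewrite $2S_2=m(m-1)R$ and $\lan\overline{X},\eta\ran=-2mH$ to get a differential inequality of the form $G'(r)\geq -\frac{m(m-1)\Lambda}{r}G(r)$, and apply the integrating factor $r^{m(m-1)\Lambda}$. The only cosmetic difference is that the paper works directly from the intermediate differential inequality (4.2) obtained in the proof of Theorem \ref{Theo-Mean}, whereas you start from the integrated statement of Theorem \ref{Theo-Mean}(i) and then differentiate back; this is immaterial, and your remark about absolute continuity (via the coarea formula) is if anything a bit more careful than the paper's brief appeal to Lebesgue's theorem on monotone functions in the proof of Corollary \ref{monotonicity}. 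The second half of the argument, taking $r\to\infty$ with positive exponent when $\Lambda<\frac{1}{2m}$, is identical to the paper's.
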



This paper is organized in four sections as follows. In the section 2 we present some preliminary results which give basis to establish the argument used in this paper, including Proposition \ref{prop-div-P1-const} and Lemma \ref{lemma-main1}. In section 3, we prove the Poincar\'e type inequalities and, in section 4, we prove the mean value inequalities and its consequences. 

{\bf Acknowledgements:} The authors are grateful to Detang Zhou and Greg\'orio Pacelli Bessa for their suggestions.

\section{Preliminary results}
Let $M$ be a $m-$dimensional hypersurface of a Riemannian $(m+1)-$manifold $\overline{M}, \ m\geq2.$ Denote by $\n$ and $\overline{\n}$ the connections of $M$ and $\overline{M},$ respectively. Let $\overline{X}:M\ria T\overline{M}$ be a vector field and write $\overline{X}=X^T + X^N,$ where $X^T\in TM$ and $X^N\in TM^\perp.$ Let $Y,Z\in TM$ be vector fields, and denote by $\lan\cdot,\cdot \ran$ the metric of $\overline{M}.$ Then
\[
\begin{split}
\lan \overline{\n}_Y\overline{X},Z\ran &= \lan \overline{\n}_Y X^T + \overline{\n}_Y X^N,Z\ran\\
&=\lan \overline{\n}_Y X^T,Z\ran + \lan\overline{\n}_Y X^N,Z\ran\\
&=\lan \overline{\n}_Y X^T,Z\ran - \lan X^N,\overline{\n}_YZ\ran\\
&=\lan \overline{\n}_Y X^T,Z\ran - \lan X^N,B(Y,Z)\ran,\\
\end{split}
\]
where $B(Y,Z)=\overline{\n}_YZ - \n_YZ$ denotes the bilinear form associated with the second fundamental form of $M$. If $\eta$ denotes the unit normal vector field of $M$, then $X^N=\lan \overline{X},\eta\ran\eta.$ It implies
\begin{equation}\label{sec.fund.form}
\begin{split}
\lan \overline{\n}_Y\overline{X},Z\ran &= \lan \overline{\n}_Y X^T,Z\ran - \lan\overline{X},\eta\ran \lan \eta,B(Y,Z)\ran\\
&=\lan \overline{\n}_Y X^T,Z\ran - \lan\overline{X},\eta\ran\lan A(Y),Z\ran,
\end{split}
\end{equation}
where $A:TM\ria TM$ is the linear operator defined by
\begin{equation}\label{shape}
\lan A(V),W\ran = \lan \eta, B(V,W)\ran, \ V,W\in TM.
\end{equation}
\begin{definition}
Let $A:TM\ria TM$ be defined by (\ref{shape}) the linear operator associated to the second fundamental form of $M$. The first Newton transformation $P_1:TM\ria TM$ is defined by
\[
P_1=S_1I-A,
\]
where $S_1=\tr_M A,$ $\tr_M$ denotes the trace in $M,$ and $I:TM\ria TM$ is the identity map.
\end{definition}
\begin{remark}\label{P1-selfadjoint}
{\normalfont
Notice that, since $A$ is self-adjoint, then $P_1$ is also a self-adjoint linear operator. Denote by $k_1,k_2,\ldots, k_m$ the eigenvalues of the linear operator $A,$ also called principal curvatures of $M$. Since $P_1$ is a self-adjoint operator, we can consider its eigenvalues $\theta_1,\theta_2,\ldots,\theta_m$ given by $\theta_i = S_1 - k_i,$ $i=1,2,\ldots,m.$
}
\end{remark}
\begin{remark}\label{P1-positive}
{\normalfont
If $S_1>0$ and $S_2\geq0,$ then $P_1$ is positive semidefinite. This fact is known, and can be found in \cite[Remark 2.1, p. 552]{AdCS}, however, we present a proof here for the sake of completeness. If $S_2\geq0,$ then $S_1^2 = |A|^2 + 2S_2 \geq k_i^2, \ \mbox{for all}\ i=1,2,\ldots,m.$ Thus, $0\leq S_1^2 - k_i^2 = (S_1 - k_i)(S_1 + k_i)$ which implies that all eigenvalues of $P_1$ are non-negative, provided $S_1>0,$ i.e., $P_1$ is positive semidefinite.
}
\end{remark}

The following result is known and we include a proof here for the sake of completeness.

\begin{lemma}\label{lemma-divP1}
If $(\di_M P_1)(V)=\tr_M(E\mapsto(\n_E P_1)(V)),$ where $(\n_E P_1)(V)=\n_E(P_1(V))-P_1(\n_EV),$ then 
\[(\di_M P_1)(V)=\ric_{\overline{M}}(V,\eta)\]
for every $V\in TM,$ where $\eta$ denotes the unitary normal vector field of $M$ and $\ric_{\overline{M}}$ denotes the Ricci tensor of $\overline{M}.$ In particular, if $\overline{M}^{m+1}$ has constant sectional curvature or is an Einstein manifold, then $\di_M P_1 =0.$
\end{lemma}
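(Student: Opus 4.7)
The plan is to unfold the definition of $\operatorname{div}_M P_1$ in a local orthonormal frame, split $P_1 = S_1 I - A$, and then use the Codazzi equation to convert the resulting trace into the Ricci tensor of the ambient manifold.

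First, I would fix $p\in M$ and choose a local orthonormal frame $\{e_1,\dots,e_m\}$ of $TM$ that is parallel at $p$ (so $\nabla_{e_i}e_j(p)=0$). For $V\in TM$, the definition
\[
(\nabla_E P_1)(V) = \nabla_E(P_1 V) - P_1(\nabla_E V)
\]
combined with $P_1 = S_1 I - A$ gives, after cancelling the terms involving $\nabla_E V$,
\[
(\nabla_E P_1)(V) = E(S_1)\,V - (\nabla_E A)(V).
\]
Tracing over $E=e_i$ at the point $p$,
\[
(\operatorname{div}_M P_1)(V) = \sum_{i=1}^{m} e_i(S_1)\,\langle V,e_i\rangle - \sum_{i=1}^{m} \langle (\nabla_{e_i} A)(V), e_i\rangle
= V(S_1) - \sum_{i=1}^{m}\langle (\nabla_{e_i} A)(V), e_i\rangle.
\]

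Next, I would apply the Codazzi equation for the hypersurface $M\subset\overline{M}$,
\[
(\nabla_{e_i} A)(V) - (\nabla_V A)(e_i) = \bigl(\overline{R}(V,e_i)\eta\bigr)^T,
\]
and contract in $e_i$. Using that the frame is parallel at $p$, one has $\sum_i\langle(\nabla_V A)(e_i),e_i\rangle = V(\operatorname{tr}_M A) = V(S_1)$, so
\[
\sum_{i=1}^{m}\langle (\nabla_{e_i}A)(V), e_i\rangle = V(S_1) + \sum_{i=1}^{m}\langle \overline{R}(V,e_i)\eta, e_i\rangle.
\]
Using the antisymmetry of $\overline{R}$ in the first two slots and completing the tangent frame to an orthonormal frame $\{e_1,\dots,e_m,\eta\}$ of $\overline{M}$ (the $\eta$--term vanishes by antisymmetry in the last two slots),
\[
\sum_{i=1}^{m}\langle \overline{R}(V,e_i)\eta, e_i\rangle = -\sum_{i=1}^{m}\langle \overline{R}(e_i,V)\eta, e_i\rangle = -\operatorname{Ric}_{\overline{M}}(V,\eta).
\]
Plugging this back, the two $V(S_1)$ terms cancel and I obtain $(\operatorname{div}_M P_1)(V) = \operatorname{Ric}_{\overline{M}}(V,\eta)$, as claimed.

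For the last assertion, if $\overline{M}$ has constant sectional curvature $\kappa$ or is Einstein, then $\operatorname{Ric}_{\overline{M}}(\cdot,\cdot)= c\langle\cdot,\cdot\rangle$ for some function $c$, so $\operatorname{Ric}_{\overline{M}}(V,\eta) = c\langle V,\eta\rangle = 0$ since $V$ is tangent and $\eta$ is normal; hence $\operatorname{div}_M P_1 \equiv 0$. The only step I expect to be delicate is keeping the sign conventions consistent: the conclusion depends on matching the paper's definitions of $A$, of the ambient curvature tensor, and of the Codazzi identity, and on correctly tracking the symmetries $\overline{R}(X,Y,Z,W)=-\overline{R}(Y,X,Z,W)$ when identifying the Ricci tensor. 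Once those conventions are pinned down, the argument reduces to the short computation above.
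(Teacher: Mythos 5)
Your proof is correct and follows essentially the same line as the paper's: both trace the Codazzi identity in a local orthonormal frame and identify the resulting ambient-curvature contraction with $\ric_{\overline{M}}(V,\eta)$, then kill it with $\lan V,\eta\ran=0$ in the constant-curvature/Einstein case. The only cosmetic difference is that you first unfold $\n_E P_1 = E(S_1)I - \n_E A$ before tracing, whereas the paper traces Codazzi with $Y=Z=e_i$ and reads off $\di_M(S_1 I)-\di_M A$ directly; your Riemann/Codazzi signs are the mirror image of the paper's (do Carmo) conventions, but they are internally consistent and give the same conclusion.
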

\begin{proof}
Let $\{e_1,e_2,\ldots,e_m\}$ be an orthonormal referential in $TM$ which is geodesic at $p\in M.$ By using the Codazzi equation
\[
\lan(\n_V A)(Y),Z\ran - \lan(\n_Y A)(V),Z\ran = \lan\overline{R}(Y,V)Z,\eta\ran
\]
for $Y=Z=e_i$ and summing over $i$ from $1$ to $m,$ we have
\[
\sum_{i=1}^m \lan(\n_V A)(e_i),e_i\ran - \lan(\n_{e_i} A)(V),e_i\ran = \lan\overline{R}(e_i,V)e_i,\eta\ran,
\]
i.e.,
\[
\sum_{i=1}^m \lan(\n_V A)(e_i),e_i\ran - (\di_M A)(V) = \ric_{\overline{M}}(V,\eta).
\]
Observing that
\[
\sum_{i=1}^m \lan(\n_V A)(e_i),e_i\ran = \sum_{i=1}^m V\lan A(e_i),e_i\ran = V(S_1) = \di_M(S_1 I)(V),
\]
where $I:TM\ria TM$ is the identity operator, we conclude that
\[
\di_M(S_1 I - A)(V) = \ric_{\overline{M}}(V,\eta).
\]
In particular, if $\overline{M}$ has constant sectional curvature $\kappa,$ then $\ric_{\overline{M}}(V,\eta)=m\kappa\lan V,\eta\ran=0,$ which implies $\di_M P_1=0.$ Also, if $\overline{M}$ is an Einstein manifold with Einstein constant $\lambda,$ we have $\ric_{\overline{M}}(V,\eta)=\lambda\lan V,\eta\ran=0,.$ Therefore $(\di_M P_1)=0.$
\end{proof}

The next result is an important tool in the proof of the Theorem \ref{Theo-Poincare}.

\begin{proposition}\label{prop-div-P1-const} If $M^m, m\geq2,$ is an hypersurface of a Riemannian $(m+1)-$manifold $\overline{M}^{m+1}$ and $\overline{X}:M\ria T\overline{M}$ is a vector field, then
\begin{equation}\label{div-Pr-const}
\di_M(P_1(X^T)) = \tr_M\left(E\longmapsto P_1\left(\left(\overline{\n}_E\overline{X}\right)^T\right)\right) + \ric_{\overline{M}}(X^T,\eta) + 2S_2\lan \overline{X},\eta\ran,
\end{equation}
where $\ric_{\overline{M}}$ denotes the Ricci tensor of $\overline{M}$ and $X^T=\overline{X}-\lan \overline{X},\eta\ran\eta$ is the tangent part of $\overline{X}.$
\end{proposition}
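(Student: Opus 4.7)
The plan is to compute $\di_M(P_1(X^T))$ in two stages, first by a product-rule–type expansion that introduces $\di_M P_1$ and a trace involving $\n_E X^T$, then by using the Gauss/Weingarten formulas to rewrite $\n_E X^T$ in terms of $(\overline{\n}_E\overline{X})^T$. The extra $2S_2\lan\overline{X},\eta\ran$ term will emerge from the algebraic identity $\tr_M(P_1\circ A)=2S_2$, while the Ricci term will come directly from Lemma \ref{lemma-divP1}.

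First, at a point $p\in M$ pick an orthonormal frame $\{e_1,\ldots,e_m\}$ geodesic at $p$, and expand
\[
\di_M(P_1(X^T))=\sum_i\lan\n_{e_i}(P_1(X^T)),e_i\ran=\sum_i\lan(\n_{e_i}P_1)(X^T),e_i\ran+\sum_i\lan P_1(\n_{e_i}X^T),e_i\ran.
\]
The first sum is $(\di_M P_1)(X^T)$, which by Lemma \ref{lemma-divP1} equals $\ric_{\overline{M}}(X^T,\eta)$. The second sum is $\tr_M(E\mapsto P_1(\n_E X^T))$, using that $P_1$ is self-adjoint (Remark \ref{P1-selfadjoint}).

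Next, I rewrite $\n_E X^T$. Writing $\overline{X}=X^T+\lan\overline{X},\eta\ran\eta$ and differentiating in $\overline{M}$,
\[
\overline{\n}_E\overline{X}=\overline{\n}_E X^T+E(\lan\overline{X},\eta\ran)\eta+\lan\overline{X},\eta\ran\overline{\n}_E\eta.
\]
The Gauss formula gives $\overline{\n}_E X^T=\n_E X^T+\lan A(E),X^T\ran\eta$, and the Weingarten formula gives $\overline{\n}_E\eta=-A(E)$. Taking tangential parts yields
\[
(\overline{\n}_E\overline{X})^T=\n_E X^T-\lan\overline{X},\eta\ran A(E),
\]
so $\n_E X^T=(\overline{\n}_E\overline{X})^T+\lan\overline{X},\eta\ran A(E)$. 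Applying $P_1$ and tracing,
\[
\tr_M(E\mapsto P_1(\n_E X^T))=\tr_M(E\mapsto P_1((\overline{\n}_E\overline{X})^T))+\lan\overline{X},\eta\ran\,\tr_M(P_1\circ A).
\]

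Finally I compute $\tr_M(P_1\circ A)$. Since $P_1=S_1 I-A$, we have $P_1\circ A=S_1 A-A^2$, whence
\[
\tr_M(P_1\circ A)=S_1\tr_M(A)-\tr_M(A^2)=S_1^2-|A|^2=2S_2,
\]
using $|A|^2=\sum k_i^2=S_1^2-2S_2$. Assembling the three contributions gives exactly (\ref{div-Pr-const}). No step looks like a serious obstacle; the only subtle point is keeping the signs straight in the Weingarten formula and confirming that $P_1$ is self-adjoint so the trace may be read off a single orthonormal frame.
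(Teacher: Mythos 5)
Your proof is correct and follows essentially the same route as the paper: both rely on Lemma \ref{lemma-divP1} for the term $(\di_M P_1)(X^T)=\ric_{\overline M}(X^T,\eta)$, on the Gauss--Weingarten decomposition relating $\n_E X^T$ to $(\overline{\n}_E\overline X)^T$ plus $\lan\overline X,\eta\ran A(E)$, and on the algebraic identity $\tr_M(P_1\circ A)=S_1^2-|A|^2=2S_2$. The only difference is that you expand $\di_M(P_1(X^T))$ directly by the product rule while the paper begins from the trace term and works back toward the divergence, but the intermediate identities are the same.
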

\begin{proof}
Let $\{e_1,e_2,\ldots,e_m\}$ be an orthonormal referential in $TM.$ First, since $P_1$ is self-adjoint, we have 
\begin{equation}\label{eq.trXP_1}
\begin{split}
\tr_M\left(E\longmapsto P_1\left(\left(\overline{\n}_E\overline{X}\right)^T\right)\right) &= \sum_{i=1}^m \left\lan P_1\left(\left(\overline{\n}_{e_i}\overline{X}\right)^T\right),e_i\right\ran =\sum_{i=1}^m \lan\overline{\n}_{e_i}\overline{X},P_1(e_i)\ran.\\
\end{split}
\end{equation}
By using (\ref{sec.fund.form}), p.\pageref{sec.fund.form}, and the self-adjointness of $A,$ we obtain
\[
\begin{split}
\sum_{i=1}^m \lan \overline{\n}_{e_i}\overline{X}, P_1(e_i)\ran &= \sum_{i=1}^m \lan \overline{\n}_{e_i} X^T, P_1(e_i)\ran - \left(\sum_{i=1}^m \lan A(e_i),P_1(e_i)\ran\right)\lan \overline{X},\eta\ran\\
&=\sum_{i=1}^m \lan \overline{\n}_{e_i} X^T, P_1(e_i)\ran - \left(\sum_{i=1}^m \lan (A\circ P_1)(e_i),e_i\ran\right)\lan \overline{X},\eta\ran\\
&=\sum_{i=1}^m \lan\overline{\n}_{e_i}X^T, P_1(e_i)\ran - \tr_M(A\circ P_1)\lan\overline{X},\eta\ran.
\end{split}
\]
Thus,
\[
\sum_{i=1}^m \lan\overline{\n}_{e_i}X^T, P_1(e_i)\ran = \tr_M\left(E\longmapsto P_1\left(\left(\overline{\n}_E\overline{X}\right)^T\right)\right) + \tr_M(A\circ P_1)\lan \overline{X},\eta\ran.
\]
On the other hand, the self-adjointness of $P_1$ implies
\[
\begin{split}
\sum_{i=1}^m \lan \overline{\n}_{e_i} X^T,P_1(e_i)\ran &= \sum_{i=1}^m \lan \n_{e_i} X^T + B(e_i,X^T),P_1(e_i)\ran = \sum_{i=1}^m \lan \n_{e_i} X^T ,P_1(e_i)\ran\\
&=\sum_{i=1}^m \lan P_1(\n_{e_i} X^T), e_i\ran = \sum_{i=1}^m \lan \n_{e_i}(P_1(X^T)),e_i\ran - \sum_{i=1}^m \lan (\n_{e_i} P_1)(X^T),e_i\ran\\
&=\di_M(P_1(X^T)) - \tr_M(E\ria (\n_E P_1)(X^T))\\
&=\di_M(P_1(X^T)) - (\di_M P_1)(X^T).
\end{split}
\]
Therefore, 
\[
\di_M(P_1(X^T)) = \tr_M\left(E\longmapsto P_1\left(\left(\overline{\n}_E\overline{X}\right)^T\right)\right) + (\di_M P_1)(X^T) + \tr_M(A\circ P_1)\lan \overline{X},\eta\ran.
\]
The result follows using the Lemma \ref{lemma-divP1} and the fact
\[
\tr_M(A\circ P_1) = \tr_M(A\circ (S_1I - A)) = S_1\tr_M(A) - \tr_M(A^2) = S_1^2 - |A|^2 = 2S_2.
\]
\end{proof}

\begin{remark}\label{remark-ricci}
{\normalfont
If the sectional curvatures $K_{\overline{M}}$ of $\overline{M}^{m+1}$ satisfy
\[
\kappa_0\leq K_{\overline{M}}\leq \kappa
\]
for real numbers $\kappa_0$ e $\kappa,$ then
\begin{equation}\label{ineq-ricci}
|\ric_{\overline{M}}(\overline{V},\overline{W})|\leq \dfrac{m(\kappa-\kappa_0)}{2}
\end{equation}
for any orthogonal pair of vectors $\overline{V},\ \overline{W}\in T\overline{M}$ such that $|\overline{V}|\leq 1,\ |\overline{W}|\leq1.$
In fact, considering the orthonormal referential $\{\overline{e}_1,\overline{e}_2,\ldots,\overline{e}_m,\overline{e}_{m+1}\}$ tangent to $\overline{M},$ we have
\[
\begin{split}
\ric_{\overline{M}}(\overline{V},\overline{V})&=\sum_{i=1}^{m+1}\lan\overline{R}(\overline{V},\overline{e}_i)\overline{V},\overline{e}_i\ran =\sum_{i=1}^{m+1}K_{\overline{M}}(\overline{V},\overline{e}_i)(|\overline{V}|^2-\lan \overline{V},\overline{e}_i\ran^2)\\
&\leq \kappa \sum_{i=1}^{m+1}(|\overline{V}|^2-\lan \overline{V},\overline{e}_i\ran^2) = \kappa [(m+1)|\overline{V}|^2-|\overline{V}|^2]\\
&=m\kappa|\overline{V}|^2.
\end{split}
\]
Analogously, \[\ric_{\overline{M}}(\overline{V},\overline{V})\geq m\kappa_0|\overline{V}|^2.\] Now, since $\ric_{\overline{M}}$ is a symmetric and bilinear form, we have
\[
\begin{split}
\ric_{\overline{M}}(\overline{V},\overline{W})&=\dfrac{1}{4}[\ric_{\overline{M}}(\overline{V}+\overline{W},\overline{V}+\overline{W})-\ric_{\overline{M}}(\overline{V}-\overline{W},\overline{V}-\overline{W})],\\
&\leq\dfrac{1}{4}[m\kappa|\overline{V}+\overline{W}|^2 - m\kappa_0|\overline{V}-\overline{W}|^2]\\
&=\frac{1}{4}[m\kappa(|\overline{V}|^2 + 2\lan\overline{V},\overline{W}\ran + |\overline{W}|^2) - m\kappa_0(|\overline{V}|^2 - 2\lan\overline{V},\overline{W}\ran + |\overline{W}|^2)]\\
&=\dfrac{1}{4}m(\kappa-\kappa_0)(|\overline{V}|^2 + |\overline{W}|^2)\\
&\leq\dfrac{m(\kappa-\kappa_0)}{2}.
\end{split}
\]
Analogously, \[\ric_{\overline{M}}(\overline{V},\overline{W})\geq-\dfrac{m(\kappa-\kappa_0)}{2}.\] Therefore,
\[
-\dfrac{m(\kappa-\kappa_0)}{2}\leq\ric_{\overline{M}}(\overline{V},\overline{W})\leq\dfrac{m(\kappa-\kappa_0)}{2}.
\]
This concludes the proof of the estimate (\ref{ineq-ricci}).
}
\end{remark}

In the next lemma we will estimate $\tr_M\left(E\longmapsto P_1\left(\left(\overline{\n}_E\overline{X}\right)^T\right)\right)$ in terms of $S_1,$ the distance function of $\overline{M}$ an the upper bound $\kappa$ of the sectional curvatures of $\overline{M}.$

\begin{lemma}\label{lemma-main1}
Let $\overline{M}^{m+1},$ $m \geq 2,$ be a Riemannian $(m+1)-$manifold with sectional curvatures bounded from above by a constant $\kappa,$ $M$ be a hypersurface of $\overline{M}^{m+1}$ such that $S_1>0$ and $S_2\geq0,$ and $\rho(x)=\rho(p,x)$ be the geodesic distance of $\overline{M}$ starting at a point $x_0\in \overline{M}.$ If $x\in M$ satisfies $\rho(x)<i(\overline{M}),$ then 

\begin{itemize}
\item[(i)] for $\kappa\leq 0$ and $\overline{X} = \rho\overline{\n}\rho,$
\[
\tr_M\left(E\longmapsto P_1\left(\left(\overline{\n}_E\overline{X}\right)^T\right)\right)(x)\geq (m-1)S_1(x);
\]
\item[(ii)] for $\kappa >0,$ $\rho(x)<\frac{\pi}{2\sqrt{\kappa}}$ and $\overline{X}=\dfrac{1}{\sqrt{\kappa}}(\sin\sqrt{\kappa}\rho)\overline{\n}\rho,$
\[
\tr_M\left(E\longmapsto P_1\left(\left(\overline{\n}_E\overline{X}\right)^T\right)\right)(x)\geq (m-1)S_1(x)(\cos\sqrt{\kappa}\rho(x)).
\]
\end{itemize}
\end{lemma}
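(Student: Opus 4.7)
The strategy is to unify both cases by writing $\overline{X}=\psi(\rho)\overline{\n}\rho$, where $\psi$ is the ``sine'' function of the model of constant curvature $\kappa_{\ast}$: $\psi(r)=r$ with $\kappa_{\ast}=0$ in (i), and $\psi(r)=\sin(\sqrt{\kappa}r)/\sqrt{\kappa}$ with $\kappa_{\ast}=\kappa$ in (ii). In case (i) the choice $\kappa_{\ast}=0$ is legitimate because $K_{\overline{M}}\leq\kappa\leq 0$ implies $K_{\overline{M}}\leq 0$, so Hessian comparison against the flat model applies. The crucial identity, immediate from the definitions, is
\begin{equation*}
\psi'(r)=\psi(r)\,\mathrm{ct}_{\kappa_{\ast}}(r),\qquad\mathrm{ct}_{0}(r)=\tfrac{1}{r},\quad\mathrm{ct}_{\kappa}(r)=\sqrt{\kappa}\cot(\sqrt{\kappa}r).
\end{equation*}

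Differentiating gives $\overline{\n}_E\overline{X}=\psi'(\rho)\lan\overline{\n}\rho,E\ran\overline{\n}\rho+\psi(\rho)\overline{\n}_E\overline{\n}\rho$. Using the identity
\begin{equation*}
\tr_M\bigl(E\mapsto P_1((\overline{\n}_E\overline{X})^T)\bigr)=\sum_{i=1}^m\lan\overline{\n}_{e_i}\overline{X},P_1(e_i)\ran
\end{equation*}
from the proof of Proposition \ref{prop-div-P1-const}, and choosing $\{e_i\}$ an orthonormal basis of $T_xM$ that diagonalizes $P_1$ with eigenvalues $\theta_i=S_1-k_i\geq 0$ (Remark \ref{P1-positive}), the trace equals
\begin{equation*}
\psi'(\rho)\sum_{i=1}^m\theta_i\lan\overline{\n}\rho,e_i\ran^2+\psi(\rho)\sum_{i=1}^m\theta_i\,\overline{\hess}\rho(e_i,e_i).
\end{equation*}
Invoking the Hessian comparison theorem
\begin{equation*}
\overline{\hess}\rho(V,V)\geq\mathrm{ct}_{\kappa_{\ast}}(\rho)\bigl(|V|^2-\lan V,\overline{\n}\rho\ran^2\bigr),
\end{equation*}
valid on the indicated range (since $\rho<i(\overline{M})$ in (i), and $\rho<\pi/(2\sqrt{\kappa})<\pi/\sqrt{\kappa}$ in (ii)), then multiplying by $\theta_i\geq 0$, summing, and using $\tr_M P_1=(m-1)S_1$ yields
\begin{equation*}
\tr_M\bigl(E\mapsto P_1((\overline{\n}_E\overline{X})^T)\bigr)\geq\psi(\rho)\mathrm{ct}_{\kappa_{\ast}}(\rho)(m-1)S_1+\bigl[\psi'(\rho)-\psi(\rho)\mathrm{ct}_{\kappa_{\ast}}(\rho)\bigr]\sum_{i=1}^m\theta_i\lan\overline{\n}\rho,e_i\ran^2.
\end{equation*}

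The bracketed coefficient vanishes by the identity $\psi'=\psi\,\mathrm{ct}_{\kappa_{\ast}}$, leaving the lower bound $(m-1)S_1\psi'(\rho)$, which is $(m-1)S_1$ in (i) and $(m-1)S_1\cos(\sqrt{\kappa}\rho)$ in (ii). The only substantive geometric input is the Hessian comparison theorem; the main difficulty is conceptual rather than computational, namely choosing the radial weight $\psi$ so that the uncontrollable tangential-to-the-sphere contribution $\sum_i\theta_i\lan\overline{\n}\rho,e_i\ran^2$ is precisely what the Jacobi-type identity $\psi'=\psi\,\mathrm{ct}_{\kappa_{\ast}}$ eliminates, which forces $\psi$ to be the model sine function.
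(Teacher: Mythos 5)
Your proof is correct and follows essentially the same route as the paper: diagonalize $P_1$, use nonnegativity of its eigenvalues $\theta_i$, and invoke the Hessian comparison theorem, with the model ``sine'' $\psi$ chosen so that $\psi'=\psi\,\mathrm{ct}_{\kappa_*}$ makes the radial contribution drop out. The only cosmetic difference is bookkeeping: the paper further decomposes each eigenvector $e_i$ into components parallel and perpendicular to $\gamma'$ and treats the two pieces separately, whereas you apply the comparison directly to $\overline{\hess}\rho(e_i,e_i)$ and let the algebra cancel; also, the term $\sum_i\theta_i\lan\overline{\n}\rho,e_i\ran^2$ you call ``tangential-to-the-sphere'' is really the radial contribution, a mislabel that does not affect the argument.
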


\begin{proof}

Let $\gamma:[0,\rho(x)]\ria\overline{M}$ defined by $\gamma(t)=\exp_{x_0}(tu), \ u\in T_{x_0}\overline{M},$ be the unit speed geodesic such that $\gamma(0)=x_0$ e $\gamma(\rho(x))=x.$ Let $\{e_1(x),e_2(x),\ldots,e_m(x)\}$ be an orthonormal basis of $T_xM$ composed by eigenvectors of $P_1$ in $x\in M,$ i.e., 
\[
P_1(e_i(x))=\theta_i(x)e_i(x), \ i=1,2,\ldots,m,
\]
see Remark \ref{P1-selfadjoint}, p.\pageref{P1-selfadjoint}. Let $Y_i,$ $i=1,2,\ldots,m,$ be the unitary projections of $e_i(x)$ over $\gamma'(\rho(x))^\perp\subset T_x\overline{M},$ namely,
\[
Y_i = \dfrac{e_i(x)-\lan e_i(x),\gamma'(\rho(x))\ran\gamma'(\rho(x))}{\|e_i(x)-\lan e_i(x),\gamma'(\rho(x))\ran\gamma'(\rho(x))\|}, \ i=1,2,\ldots,m.
\]
Thus,
\[
e_i(x) = b_i Y_i + c_i \gamma'(\rho(x)),
\]
where $b_i = \|e_i(x)-\lan e_i(x),\gamma'(\rho(x))\ran\gamma'(\rho(x))\|$ and $c_i = \lan e_i(x),\gamma'(\rho(x))\ran$ satisfy $b_i^2+c_i^2=1$ and $Y_i\perp\gamma'$ for all $i=1,2,\ldots,m.$ Since the sectional curvatures of $\overline{M}$ satisfy $K_{\overline{M}}\leq\kappa$ and, in the case we assume $\kappa>0,$ we have $\rho(x)<\frac{\pi}{2\sqrt{\kappa}},$ then do not exist conjugate points to $x_0$ along $\gamma.$ Then
\[
\begin{split}
\tr_M\left(E\longmapsto P_1\left(\left(\overline{\n}_E\overline{X}\right)^T\right)\right) =& \sum_{i=1}^m \lan\overline{\n}_{e_i}\overline{X}, P_1(e_i)\ran = \sum_{i=1}^m\theta_i\lan\overline{\n}_{e_i} \overline{X},e_i\ran\\
=&\sum_{i=1}^m \theta_i\lan\overline{\n}_{b_iY_i+c_i\gamma'}\overline{X},b_iY_i+c_i\gamma'\ran\\
=&\sum_{i=1}^m\theta_ib_i^2\lan\overline{\n}_{Y_i}\overline{X},Y_i\ran + \sum_{i=1}^m\theta_ic^2_i\lan\overline{\n}_{\gamma'}\overline{X},\gamma'\ran\\
&\qquad+\sum_{i=1}^m \theta_ib_ic_i\left[\lan\overline{\n}_{Y_i}\overline{X},\gamma'\ran+\lan\overline{\n}_{\gamma'}\overline{X},Y_i\ran\right].\\
\end{split}
\]
In order to unify the proof, let us denote by
\[
G(\rho)=\left\{
\begin{array}{cc}
\rho,&\ \mbox{if} \ \kappa\leq0;\\
\dfrac{1}{\sqrt{\kappa}}(\sin\sqrt{\kappa}\rho),&\ \mbox{if} \ \kappa>0.\\
\end{array}
\right.
\]
Since $\overline{X}(t)=G(\rho(t))\overline{\n}\rho(t) = G(\rho(t))\gamma'(t)$ and $\overline{\n}_{\gamma'}\gamma'=0,$ we have
\[
\begin{split}
\lan\overline{\n}_{\gamma'}\overline{X},\gamma'\ran &= \lan \overline{\n}_{\gamma'}(G(\rho)\gamma'),\gamma'\ran = \lan G'(\rho)\lan\overline{\n}\rho,\gamma'\ran\gamma' + G(\rho) \overline{\n}_{\gamma'}\gamma',\gamma'\ran\\
&=G'(\rho)\lan\overline{\n}\rho,\gamma'\ran\lan\gamma',\gamma'\ran=G'(\rho),\\
\lan\overline{\n}_{Y_i}\overline{X},\gamma'\ran& = \lan\overline{\n}_{Y_i}(G(\rho)\gamma'),\gamma'\ran = \lan G'(\rho) \lan Y_i,\overline{\n}\rho\ran\gamma' + G(\rho)\overline{\n}_{Y_i}\gamma',\gamma'\ran\\
&=G'(\rho)\lan Y_i,\gamma'\ran + G(\rho)\lan \overline{\n}_{Y_i}\gamma',\gamma'\ran\\
&=\frac{G(\rho)}{2} Y_i\lan\gamma',\gamma'\ran =0,\\
\lan\overline{\n}_{\gamma'}\overline{X},Y_i\ran &= \lan \overline{\n}_{\gamma'}(G(\rho)\gamma'),Y_i\ran= \lan G'(\rho)\lan\gamma',\overline{\n}\rho\ran\gamma' + G(\rho)\overline{\n}_{\gamma'}\gamma', Y_i\ran=0.
\end{split}
\]
Thus,
\[
\tr_M\left(E\longmapsto P_1\left(\left(\overline{\n}_E\overline{X}\right)^T\right)\right) = \sum_{i=1}^m \theta_ib_i^2\lan\overline{\n}_{Y_i}\overline{X},Y_i\ran + \sum_{i=1}^m \theta_ic_i^2.
\]
On the other hand, is well known that
\[
\lan\overline{\n}_{U}\overline{\n}\rho, V\ran = \dfrac{G'(\rho)}{G(\rho)}[\lan U,V\ran - \lan U,\overline{\n}\rho\ran \lan V,\overline{\n}\rho\ran],
\]
for $\R^{m+1}$ and $\s^{m+1}(\kappa).$ Since
\[
\begin{split}
\lan\overline{\n}_{Y_i}\overline{X},Y_i\ran &= \lan\overline{\n}_{Y_i}(G(\rho)\overline{\n}\rho),Y_i\ran\\
& = \lan G'(\rho)\lan Y_i,\overline{\n}\rho\ran \overline{\n}\rho + G(\rho)\overline{\n}_{Y_i}\overline{\n}\rho, Y_i\ran\\
&= G(\rho)\lan\overline{\n}_{Y_i}\overline{\n}\rho, Y_i\ran,
\end{split}
\]
using Hessian comparison theorem for $\overline{M}$ and the model spaces $\R^{m+1}$ for $\kappa\leq 0$ and $\s^{m+1}(\kappa)$ for $\kappa>0,$ we have
\[
\begin{split}
\lan\overline{\n}_{Y_i}\overline{X},Y_i\ran &=G(\rho)\lan\overline{\n}_{Y_i}\overline{\n}\rho, Y_i\ran\\
&\geq G'(\rho)[|Y_i|^2 - \lan Y_i,\overline{\n}\rho\ran^2]\\
&= G'(\rho). 
\end{split}
\]
Therefore,
\[
\begin{split}
\tr_M\left(E\longmapsto P_1\left(\left(\overline{\n}_E\overline{X}\right)^T\right)\right) &= \sum_{i=1}^m \theta_ib_i^2\lan\overline{\n}_{Y_i}\overline{X},Y_i\ran + \sum_{i=1}^m \theta_ic_i^2\\
& = G'(\rho)\sum_{i=1}^m \theta_ib_i^2 + \sum_{i=1}^m \theta_ic_i^2\\
& \geq G'(\rho)\sum_{i=1}^m \theta_i(b_i^2 + c_i^2) = G'(\rho)\sum_{i=1}^m \theta_i\\
& = (m-1)S_1 G'(\rho).
\end{split}
\]
Since $G'(\rho)=1$ for $\kappa\leq 0$ and $G'(\rho) = (\cos\sqrt{\kappa}\rho)\leq 1$ for $\kappa>0,$ we have the result.
\end{proof}


We conclude this section with the following

\begin{lemma}\label{lemma-est-P1}
Let $M$ be a hypersurface of a Riemannian $(m+1)-$manifold $\overline{M}^{m+1}$ such that $S_1>0$ and $S_2\geq0.$ Let $P_1=S_1I-A,$ where $A:TM\ria TM$ denotes the linear operator associated to the second fundamental form of $M$ and $I:TM\ria TM$ denotes the identity operator. Then
\[
|\lan P_1(U),V\ran|\leq 2S_1|U||V|
\]
for all $U,V\in TM.$
\end{lemma}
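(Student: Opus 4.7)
The plan is to diagonalize $P_1$ and bound its operator norm, then apply Cauchy--Schwarz.

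First, I would recall from Remark \ref{P1-selfadjoint} that $P_1$ is self-adjoint with eigenvalues $\theta_i = S_1 - k_i$ for $i=1,\ldots,m$, where $k_1,\ldots,k_m$ are the principal curvatures of $M$. Next, I would use the key identity $S_1^2 = |A|^2 + 2S_2$ (which is exactly what appears in Remark \ref{P1-positive}). Since $|A|^2 = \sum_j k_j^2 \geq k_i^2$ for each $i$, combined with the hypothesis $S_2 \geq 0$, this gives
\[
S_1^2 \geq k_i^2, \qquad i=1,2,\ldots,m,
\]
and hence $-S_1 \leq k_i \leq S_1$ (using $S_1>0$). Consequently
\[
0 \leq \theta_i = S_1 - k_i \leq 2S_1, \qquad i=1,2,\ldots,m.
\]

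Now I would pick an orthonormal basis $\{e_1,\ldots,e_m\}$ of $T_xM$ diagonalizing $P_1$, so that for any $U \in T_xM$ with $U = \sum_i u_i e_i$ one has $P_1(U) = \sum_i \theta_i u_i e_i$ and therefore
\[
|P_1(U)|^2 = \sum_{i=1}^m \theta_i^2 u_i^2 \leq (2S_1)^2 \sum_{i=1}^m u_i^2 = 4 S_1^2 |U|^2.
\]
Taking square roots yields $|P_1(U)| \leq 2S_1 |U|$. The Cauchy--Schwarz inequality in $T_xM$ then gives
\[
|\langle P_1(U),V\rangle| \leq |P_1(U)|\,|V| \leq 2S_1 |U|\,|V|,
\]
which is the desired estimate. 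There is no real obstacle here; the only substantive input is the eigenvalue bound $|k_i| \leq S_1$ that follows from the assumption $S_2 \geq 0$, exactly the observation already used in Remark \ref{P1-positive}.
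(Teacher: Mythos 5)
Your proof is correct and takes essentially the same approach as the paper: both derive the eigenvalue bound $0 \leq \theta_i \leq 2S_1$ from $S_2 \geq 0$ and then conclude by Cauchy--Schwarz. The only cosmetic difference is in the final step: the paper factors $P_1 = \sqrt{P_1}\circ\sqrt{P_1}$ (using positive semidefiniteness) and applies Cauchy--Schwarz to $\langle\sqrt{P_1}(U),\sqrt{P_1}(V)\rangle$, whereas you bound the operator norm $|P_1(U)| \leq 2S_1|U|$ directly via diagonalization and then apply ordinary Cauchy--Schwarz -- both are equally valid.
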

\begin{proof}
Let $\theta_i,$ $i=1,2,\ldots,m,$ be the eigenvalues of $P_1.$ Since $\theta_i = S_1 - k_i,$ where $k_i$ are the eigenvalues of the second fundamental form $A,$ we have
\begin{equation}\label{est.P1-2}
\begin{split}
\theta_i &= S_1 -k_i \leq S_1 + |k_i|\\
&\leq S_1 + \sqrt{k_1^2 + k_2^2 + \cdots + k_m^2}\\
&= S_1 + |A| = S_1 + \sqrt{S_1^2 - 2S_2}\\
&\leq 2S_1.
\end{split}
\end{equation}
By using that $P_1$ is positive semidefinite, the Cauchy-Schwarz inequality, and the estimate (\ref{est.P1-2}), we obtain
\begin{equation}\label{ineq.est.P1-2}
\begin{split}
|\lan P_1(U),V\ran|&=|\lan\sqrt{P_1}(U),\sqrt{P_1}(V)\ran|\\
                   &\leq|\sqrt{P_1}(U)||\sqrt{P_1}(V)|\\
                   &=\lan P_1(U),U\ran^{1/2}\lan P_1(V),V\ran^{1/2}\\
                   &\leq (2S_1)^{1/2}|U|(2S_1)^{1/2}|V|\\
                   &=2S_1|U||V|.
\end{split} 
\end{equation}
\end{proof}

\section{Proof of the Poincar\'e inequality}

\begin{proof}[Proof of Theorem \ref{Theo-Poincare}.]
{\bf Case $\kappa\leq 0$.} Initially, since $(\diam \Omega)<2i(\overline{M}),$ we can consider $B_r(x_0),$ $x_0\in \overline{M},$  the smallest extrinsic ball containing $\overline{\Omega},$ and $\rho(x)=\rho(x_0,x)$ the extrinsic distance from $x_0$ to $x\in M.$ Since $\Omega\subset B_r(x_0),$ then, for all $x\in\Omega,$ 
\begin{equation}\label{diam-omega}
\rho(x)\leq r = \dfrac{(\diam\Omega)}{2}.
\end{equation}
Multiplying by $f$ the expression (\ref{div-Pr-const}) in the Proposition \ref{prop-div-P1-const}, p. \pageref{div-Pr-const}, for the vector field $\overline{X}=\rho\overline{\n}\rho$ and using item (i) of Lemma \ref{lemma-main1}, p. \pageref{lemma-main1}, we have 
\begin{equation}\label{div-f}
f\di_M(P_1(\rho\n\rho)) \geq (m-1)fS_1 + f\ric_{\overline{M}}(\rho\n\rho,\eta) + 2S_2 f \lan\rho\overline{\n}\rho,\eta\ran.
\end{equation}
This implies
\[
\begin{split}
\di_M(fP_1(\rho\n\rho))&=f\di_M(P_1(\rho\n\rho)) + \lan\n f,P_1(\rho\n\rho)\ran\\
                     &\geq (m-1)fS_1 + f\ric_{\overline{M}}(\rho\n\rho,\eta) + 2S_2 f\lan\rho\overline{\n}\rho,\eta\ran + \lan\n f,P_1(\rho\n\rho)\ran.\\
\end{split}
\]
Integrating expression above over $\Omega,$ using divergence theorem, we have
\[
0\geq (m-1)\int_\Omega fS_1 dM + \int_\Omega f\ric_{\overline{M}}(\rho\n\rho,\eta) dM + 2\int_\Omega S_2 f\lan\rho\overline{\n}\rho,\eta\ran dM + \int_\Omega \lan\n f,P_1(\rho\n\rho)\ran dM,
\]
for every function $f$ compactly supported on $\Omega$, i.e.,
\begin{equation}\label{eq.poincare}
\begin{split}
\int_\Omega fS_1 dM &\leq \frac{1}{m-1}\left[ \int_\Omega \lan\n f,P_1(-\rho\n\rho)\ran dM +  \int_\Omega  f\ric_{\overline{M}}(-\rho\n\rho,\eta) dM\right.\\
&\qquad\qquad\quad\left. + 2\int_\Omega S_2 f\lan-\rho\overline{\n}\rho,\eta\ran dM\right]. 
\end{split}
\end{equation}
By using Lemma \ref{lemma-est-P1} for $U=-\rho\n\rho$ and $V=\n f,$ we have
\[
|\lan \n f, P_1(-\rho\n\rho)\ran| \leq 2S_1\rho|\n f|,
\]
and the estimate (\ref{ineq-ricci}), p. \pageref{ineq-ricci}, gives
\[
\ric_{\overline{M}}(-\n\rho,\eta)\leq \frac{m(\kappa - \kappa_0)}{2}.
\]
Replacing these inequalities in (\ref{eq.poincare}) we obtain
\[
\begin{split}
\int_\Omega fS_1 dM &\leq \frac{2}{m-1}\int_\Omega\rho\left[|\n f|S_1 + \left(\dfrac{m(\kappa-\kappa_0)}{4} + S_2\right)f\right]dM.
\end{split}
\]
Therefore, using (\ref{diam-omega}),
\[
\int_\Omega fS_1 dM \leq \frac{1}{m-1}(\diam \Omega)\int_\Omega\left[|\n f|S_1 + \left(\dfrac{m(\kappa-\kappa_0)}{4} + S_2\right)f\right]dM.
\]

{\bf Case $\kappa>0$.} Again, since $(\diam\Omega)< 2i(\overline{M})$ and $(\diam\Omega)<\frac{\pi}{\sqrt{\kappa}},$ we can consider $B_r(x_0), \ x_0\in\overline{M}^{m+1},$ the smallest extrinsic ball containing $\overline{\Omega},$ and $\rho(x)=\rho(x_0,x)$ the extrinsic distance from $x_0$ to $x\in M.$ Since $\Omega\subset B_r(x_0),$ then 
\begin{equation}\label{diam-omega-2}
\rho(x)\leq r = \dfrac{(\diam\Omega)}{2},
\end{equation}
for all $x\in\Omega.$ 
By using the Proposition \ref{prop-div-P1-const} for $\overline{X}=\frac{1}{\sqrt{\kappa}}(\sin\sqrt{\kappa}\rho)\overline{\n}\rho$ and using the Lemma \ref{lemma-main1}, item (ii), we have
\[
\begin{split}
f\di_M\left(P_1\left(\frac{(\sin\sqrt{\kappa}\rho)}{\sqrt{\kappa}}\n\rho\right)\right) &\geq (m-1)fS_1(\cos\sqrt{\kappa}\rho) + \frac{(\sin\sqrt{\kappa}\rho)}{\sqrt{\kappa}}f\ric_{\overline{M}}(\n\rho,\eta)\\
&\qquad +2S_2f\frac{(\sin\sqrt{\kappa}\rho)}{\sqrt{\kappa}}\lan\overline{\n}\rho,\eta\ran.\\
\end{split}
\]
This implies that
\[
\begin{split}
\di_M(fP_1((\sin\sqrt{\kappa}\rho)\n\rho))&=f\di_M(P_1((\sin\sqrt{\kappa}\rho)\n\rho)) + (\sin\sqrt{\kappa}\rho)\lan\n f,P_1(\n\rho)\ran\\
&\geq (m-1)fS_1\sqrt{\kappa}(\cos\sqrt{\kappa}\rho) + (\sin\sqrt{\kappa}\rho)f\ric_{\overline{M}}(\n\rho,\eta)\\
&\qquad+2S_2f(\sin\sqrt{\kappa}\rho)\lan\overline{\n}\rho,\eta\ran + (\sin\sqrt{\kappa}\rho)\lan\n f,P_1(\n\rho)\ran.\\
\end{split}
\]
Integrating the expression above over $\Omega$ and applying divergence theorem, we obtain
\begin{equation}\label{eq.div-sph}
\begin{split}
0&\geq (m-1)\sqrt{\kappa}\int_\Omega fS_1(\cos\sqrt{\kappa}\rho)dM + \int_\Omega(\sin\sqrt{\kappa}\rho)f\ric_{\overline{M}}(\n\rho,\eta)dM\\
&\qquad+\int_\Omega 2S_2f(\sin\sqrt{\kappa}\rho)\lan\overline{\n}\rho,\eta\ran dM + \int_\Omega(\sin\sqrt{\kappa}\rho)\lan\n f,P_1(\n\rho)\ran dM,\\
\end{split}
\end{equation}
since $f$ is compactly supported in $\Omega.$ By using the Lemma \ref{lemma-est-P1}, p. \pageref{lemma-est-P1}, for $U=-\n\rho$ and $V=\n f,$ we have
\[
|\lan \n f, P_1(-\n\rho)\ran| \leq 2S_1|\n f|.
\]
Replacing this estimate in (\ref{eq.div-sph}), and using the estimate (\ref{ineq-ricci}), p. \pageref{ineq-ricci}, we obtain
\[
\begin{split}
\int_\Omega fS_1(\cos\sqrt{\kappa}\rho)dM&\leq \frac{1}{(m-1)\sqrt{\kappa}}\int_\Omega(\sin\sqrt{\kappa}\rho)\left[\lan\n f, P_1(\n \rho)\ran + f\ric_{\overline{M}}(\n\rho,\eta) + 2S_2f\right]dM\\
&\leq \frac{2}{(m-1)\sqrt{\kappa}}\int_\Omega(\sin\sqrt{\kappa}\rho)\left[|\n f|S_1 + \left(\frac{m(\kappa-\kappa_0)}{4} + S_2\right)f\right]dM.
\end{split}
\]
Since $\rho(x)\leq r<\frac{\pi}{2\sqrt{\kappa}}$ for all $x\in\Omega,$ $(\cos\sqrt{\kappa}\rho)$ is a decreasing function and $(\sin\sqrt{\kappa}\rho)$ is an increasing function for $\rho\in\left(0,\frac{\pi}{2\sqrt{\kappa}}\right),$ we have
\[
\begin{split}
(\cos\sqrt{\kappa}r)\int_\Omega fS_1 dM &\leq \int_\Omega fS_1(\cos\sqrt{\kappa}\rho)dM\\
                                        &\leq\frac{2}{(m-1)\sqrt{\kappa}}\int_\Omega(\sin\sqrt{\kappa}\rho)\left[|\n f|S_1 + \left(\frac{m(\kappa-\kappa_0)}{4} + S_2\right)f\right]dM\\
                                        &\leq\frac{2}{(m-1)\sqrt{\kappa}}(\sin\sqrt{\kappa}r)\int_\Omega\left[|\n f|S_1 + \left(\frac{m(\kappa-\kappa_0)}{4} + S_2\right)f\right]dM.\\
\end{split}
\]
i.e.,
\[
\int_\Omega fS_1 dM \leq \frac{2(\tan\sqrt{\kappa}r)}{\sqrt{\kappa}(m-1)}\int_\Omega\left[|\n f|S_1 + \left(\frac{m(\kappa-\kappa_0)}{4} + S_2\right)f\right]dM.
\]
Therefore, using (\ref{diam-omega-2}),
\[
\int_\Omega fS_1 dM \leq \frac{2\tan\left(\frac{\sqrt{\kappa}}{2}(\diam\Omega)\right)}{\sqrt{\kappa}(m-1)}\int_\Omega\left[|\n f|S_1 + \left(\frac{m(\kappa-\kappa_0)}{4} + S_2\right)f\right]dM,
\] 
for $(\diam\Omega)<\dfrac{\pi}{\sqrt{\kappa}}.$
\end{proof}

\section{Proof of the mean value inequalities and its applications}

From now on, we will let $B_r=B_r(x_0)$ be the extrinsic open ball with center $x_0\in\overline{M}^{m+1}$ and radius $r.$ If $\partial M\neq \emptyset,$ assume in addition that $B_r(x_0)\cap\partial M=\emptyset.$ 

\begin{proof}[Proof of Theorem \ref{Theo-Mean}.]
Let $\overline{X}$ be a vector field on the ambient space. Since 
\[
\di_M(P_1(fX^T)) = \lan \n f, P_1(X^T)\ran + f\di_M(P_1(X^T)),
\]
using the Proposition \ref{prop-div-P1-const}, p. \pageref{prop-div-P1-const}, we have
\[
\di_M(P_1(fX^T)) = \lan \n f,P_1(X^T)\ran + f\tr_M\left(E\mapsto P_1((\overline{\n}_E\overline{X})^T)\right) + f\ric_{\overline{M}}(X^T,\eta) + 2S_2f\lan \overline{X},\eta\ran.
\]
By using integration, the divergence theorem, Lemma \ref{lemma-est-P1}, and the co-area formula, we have
\[
\begin{split}
\int_{M\cap B_r} \di_M(P_1(fX^T))dM&=\int_{\partial(M\cap B_r)}\lan P_1(fX^T),\nu\ran dS_M\\
&\leq 2\int_{\partial(M\cap B_r)} fS_1|X^T|dS_M\\
&\leq 2\sup_{\partial(M\cap B_r)}|X^T|\int_{\partial(M\cap B_r)} fS_1|\n\rho|^{-1}dS_M\\
&=2\sup_{\partial(M\cap B_r)}|X^T|\dfrac{d}{dr}\left(\int_{M\cap B_r} fS_1 dM\right),\\
\end{split}
\]
where $\nu$ is the outer conormal vector field of $\partial(M\cap B_r)$ and $dS_M$ is the volume element of $\partial(M\cap B_r).$ This implies
\[
\begin{split}
2\sup_{\partial(M\cap B_r)}|X^T|&\dfrac{d}{dr}\left(\int_{M\cap B_r}fS_1 dM\right)\geq \int_{M\cap B_r}\lan \n f, P_1(X^T)\ran dM\\
& + \int_{M\cap B_r} f\tr_M\left(E\mapsto P_1((\overline{\n}_E\overline{X})^T)\right)dM +\int_{M\cap B_r}f\ric_{\overline{M}}(X^T,\eta)dM\\
&\qquad\qquad\qquad + 2\int_{M\cap B_r}S_2f\lan\overline{X},\eta\ran dM.\\
\end{split}
\]
{\bf Case $\kappa\leq 0$.} Taking $\overline{X}=\rho\overline{\n}\rho,$ we have $\sup_{\partial(M\cap B_r)}|X^T|=r$ and, using Lemma \ref{lemma-main1}, p. \pageref{lemma-main1}, we have
\[
\begin{split}
2r\dfrac{d}{dr}\left(\int_{M\cap B_r}fS_1 dM\right)&\geq \int_{M\cap B_r}\lan \n f, P_1(\rho\n\rho)\ran dM+ (m-1)\int_{M\cap B_r} fS_1dM\\
& \qquad +\int_{M\cap B_r}f\ric_{\overline{M}}(\rho\n\rho,\eta)dM+ 2\int_{M\cap B_r}S_2f\lan\rho\overline{\n}\rho,\eta\ran dM,\\
\end{split}
\]
i.e.,
\[
\begin{split}
r\dfrac{d}{dr}\left(\int_{M\cap B_r} fS_1 dM \right) &- \dfrac{m-1}{2}\int_{M\cap B_r} fS_1 dM\\
&\geq \dfrac{1}{2}\int_{M\cap B_r}\left[\lan\rho\overline{\n}\rho, P_1(\n f) + 2S_2f\eta\ran + f\ric_{\overline{M}}(\rho\n\rho,\eta)\right] dM.
\end{split}
\]
Since
\[
r\dfrac{d}{dr}\left(\int_{M\cap B_r} fS_1 dM \right) - \dfrac{m-1}{2}\int_{M\cap B_r} fS_1 dM = r^{\frac{m+1}{2}}\dfrac{d}{dr}\left(\frac{1}{r^{\frac{m-1}{2}}}\int_{M\cap B_r}fS_1 dM\right),
\]
we have, dividing by $r^{\frac{m+1}{2}},$
\begin{equation}\label{eq.diff-2}
\dfrac{d}{dr}\left(\frac{1}{r^{\frac{m-1}{2}}}\int_{M\cap B_r}fS_1 dM\right)\geq \dfrac{1}{2r^{\frac{m+1}{2}}}\int_{M\cap B_r}\left[\lan\rho\overline{\n}\rho, P_1(\n f) + 2S_2f\eta\ran + f\ric_{\overline{M}}(\rho\n\rho,\eta)\right] dM.
\end{equation}
Integrating the inequality (\ref{eq.diff-2}) from $s$ to $t,$ we have the result for the case $\kappa\leq 0.$ 

{\bf Case $\kappa>0$.} Consider $\overline{X}=\frac{1}{\sqrt{\kappa}}(\sin\sqrt{\kappa}\rho)\overline{\n}\rho.$ Since we are assuming $\rho<\frac{\pi}{2\sqrt{\kappa}},$ we have 
\[
\sup_{\partial(M\cap B_r)}|X^T| = \dfrac{1}{\sqrt{\kappa}}(\sin \sqrt{\kappa}r)
\]
and, using Lemma \ref{lemma-main1}, we obtain
\[
\begin{split}
\frac{2}{\sqrt{\kappa}}(\sin\sqrt{\kappa}r)&\dfrac{d}{dr}\left(\int_{M\cap B_r} fS_1 dM\right)\geq \int_{M\cap B_r} \dfrac{1}{\sqrt{\kappa}}(\sin\sqrt{\kappa}\rho)\lan\n \rho, P_1(\n f)\ran dM\\
& +(m-1)\int_{M\cap B_r}(\cos\sqrt{\kappa}\rho)fS_1 dM +\int_{M\cap B_r}\frac{(\sin\sqrt{\kappa}\rho)}{\sqrt{\kappa}} f\ric_{\overline{M}}(\n\rho,\eta) dM\\
&\qquad + 2\int_{M\cap B_r} \frac{(\sin\sqrt{\kappa}\rho)}{\sqrt{\kappa}}S_2f\lan \overline{\n}\rho,\eta\ran dM.
\end{split}
\]
Since $(\cos\sqrt{\kappa}\rho)$ is a decreasing function for $\rho\leq\frac{\pi}{2\sqrt{\kappa}},$ we have
\[
\int_{M\cap B_r}(\cos\sqrt{\kappa}\rho) fS_1 dM \geq (\cos\sqrt{\kappa}r)\int_{M\cap B_r} fS_1 dM,
\]
and thus, dividing by $\frac{2}{\sqrt{\kappa}}(\sin\sqrt{\kappa}r)$ we obtain
\[
\begin{split}
\dfrac{d}{dr}\left(\int_{M\cap B_r} fS_1 dM\right) &- \dfrac{m-1}{2}\sqrt{\kappa}(\cot\sqrt{\kappa}r)\int_{M\cap B_r} fS_1 dM\\
&\!\!\!\!\!\!\!\!\!\!\!\!\!\!\!\!\!\!\!\!\!\!\!\!\!\!\!\!\!
\geq \dfrac{1}{2(\sin\sqrt{\kappa}r)}\int_{M\cap B_r}(\sin\sqrt{\kappa}\rho)\left[\lan\overline{\n}\rho, P_1(\n f) + 2S_2f\eta\ran+ f\ric_{\overline{M}}(\n\rho,\eta)\right] dM.\\
\end{split}
\]
Since
\[
\begin{split}
\dfrac{d}{dr}\left(\int_{M\cap B_r} fS_1 dM\right) &- \frac{m-1}{2}\sqrt{\kappa}(\cot\sqrt{\kappa}r)\int_{M\cap B_r} fS_1 dM\\
& = (\sin \sqrt{\kappa}r)^{\frac{m-1}{2}}\dfrac{d}{dr}\left(\frac{1}{(\sin\sqrt{\kappa} r)^{\frac{m-1}{2}}}\int_{M\cap B_r} fS_1 dM\right),\\
\end{split}
\]
we have
\begin{equation}\label{eq.diff-1}
\begin{split}
\dfrac{d}{dr}&\left(\frac{1}{(\sin\sqrt{\kappa}r)^{\frac{m-1}{2}}}\int_{M\cap B_r} fS_1 dM\right)\\
& \geq \frac{1}{2(\sin\sqrt{\kappa}r)^{\frac{m+1}{2}}}\int_{M\cap B_r}(\sin\sqrt{\kappa}\rho)\left[\lan\overline{\n}\rho, P_1(\n f) + 2S_2f\eta\ran+ f\ric_{\overline{M}}(\n\rho,\eta)\right] dM.\\
\end{split}
\end{equation}
Integrating the inequality (\ref{eq.diff-1}) from $s$ to $t,$ we have the result for the case $\kappa>0.$
\end{proof}

\begin{remark}\label{convex case}{\normalfont\ If $A\geq0,$ then we can estimate the eigenvalues of $P_1$ by $\theta_i=S_1-k_i\leq S_1$ in the place of $\theta_i\leq 2S_1$ in the proof of the Lemma \ref{lemma-est-P1}, p. \pageref{lemma-est-P1}. In this case, the exponents of the mean value inequalities become $(m-1)$ in the place of $\frac{m-1}{2}$ and the mean value inequalities become:
\item[(i)] for $\kappa\leq 0,$ 
\[
\begin{split}
\dfrac{1}{t^{m-1}}& \int_{M\cap B_t}fS_1dM - \dfrac{1}{s^{m-1}}\int_{M\cap B_s}fS_1dM\\
&\geq \int_s^t \frac{1}{r^{m}}\int_{M\cap B_r}[\left\lan\rho\overline{\n}\rho,P_1(\n f) + 2S_2f\eta\right\ran + f\ric_{\overline{M}}(\rho\n\rho,\eta)] dM dr;
\end{split}
\]
\item[(ii)] for $\kappa>0$ and $t<\dfrac{\pi}{2\sqrt{\kappa}},$
\[
\begin{split}
&\dfrac{1}{(\sin\sqrt{\kappa} t)^{m-1}} \int_{M\cap B_t}fS_1dM - \dfrac{1}{(\sin\sqrt{\kappa} s)^{m-1}}\int_{M\cap B_s}fS_1dM\\
&\qquad \geq \int_s^t \frac{1}{(\sin\sqrt{\kappa}r)^{m}}\int_{M\cap B_r}(\sin\sqrt{\kappa}\rho) [\lan\overline{\n}\rho,P_1(\n f) + 2S_2f\eta\ran + f\ric_{\overline{M}}(\n\rho,\eta)] dM dr.
\end{split}
\]
}
\end{remark}

Now will prove the corollaries stated in the introduction of this paper.

\begin{proof}[Proof of Corollary \ref{monotonicity}.] Let us prove the case $\kappa>0.$ The case $\kappa\leq0$ is entirely analogous. Applying the mean value inequality of Theorem \ref{Theo-Mean}, p. \pageref{Theo-Mean}, item (ii), for $f\equiv1,$ we have
\[
\begin{split}
\dfrac{1}{(\sin\sqrt{\kappa} t)^{\frac{m-1}{2}}}\int_{M\cap B_t} S_1 dM& - \dfrac{1}{(\sin\sqrt{\kappa} s)^{\frac{m-1}{2}}}\int_{M\cap B_s} S_1 dM \\
&\!\!\!\!\!\!\!\!\!\geq \int_s^t \frac{1}{(\sin\sqrt{\kappa}r)^{\frac{m+1}{2}}}\int_{M\cap B_r}(\sin\sqrt{\kappa}\rho) [2S_2\lan\overline{\n}\rho,\eta\ran + \ric_{\overline{M}}(\n\rho,\eta)] dM dr.\\
\end{split}
\]
Since
\[
\begin{split}
\int_{M\cap B_r}\!\!\!(\sin\sqrt{\kappa}\rho) \left[2S_2\lan\overline{\n}\rho,\eta\ran + \ric_{\overline{M}}(\n\rho,\eta)\right] dM \geq - 2(\sin\sqrt{\kappa}r)\int_{M\cap B_r}\left(\dfrac{m(\kappa-\kappa_0)}{4} + S_2\right)dM,\\
\end{split}
\]
and using the hypothesis (\ref{hyp-mono-geral}), p. \pageref{hyp-mono-geral}, we have, for $0<s<t<R_0,$
\[
\begin{split}
\dfrac{1}{(\sin\sqrt{\kappa} t)^{\frac{m-1}{2}}}\int_{M\cap B_t}S_1dM &- \dfrac{1}{(\sin\sqrt{\kappa} s)^{\frac{m-1}{2}}}\int_{M\cap B_s}S_1dM\\
&\geq - \int_s^t(\sin\sqrt{\kappa}r)^{-\frac{m-1}{2}}\int_{M\cap B_r}\left(\dfrac{m(\kappa-\kappa_0)}{4}+S_2\right)dM dr\\
&\geq-\al\Lambda \int_s^t(\sin\sqrt{\kappa}r)^{-\frac{m-1}{2}}\left(\dfrac{r}{R_0}\right)^{\al-1}\int_{M\cap B_r}S_1dM dr.\\
\end{split}
\]
Letting $\displaystyle{g(r)=\dfrac{1}{(\sin\sqrt{\kappa}r)^{\frac{m-1}{2}}}\int_{M\cap B_r}S_1dM}$ and dividing by $t-s,$ inequality above becomes
\[
\begin{split}
\dfrac{g(t)-g(s)}{t-s}&\geq-\dfrac{1}{t-s}\int_s^t \al\Lambda\left(\dfrac{r}{R_0}\right)^{\al-1}g(r)dr\\
&=-\dfrac{1}{t-s}\left[\int_\ve^t \al\Lambda\left(\dfrac{r}{R_0}\right)^{\al-1}g(r)dr - \int_\ve^s \al\Lambda\left(\dfrac{r}{R_0}\right)^{\al-1}g(r)dr\right],
\end{split}
\]
for every $\ve>0$ sufficiently small.

Since $\displaystyle{r\mapsto\int_{M\cap B_r} S_1dM}$ is a monotone non-decreasing function, a classical theorem of Lebesgue guarantee that this function is differentiable almost everywhere with respect to Lebesgue measure of $\R$.
Consequently, the same holds for $g(r).$ Considering the points $s$ such that $g$ is differentiable and taking $t\ria s,$ the left hand side goes to $g'(s)$ and the right hand side goes to $\al\Lambda\left(\dfrac{s}{R_0}\right)^{\al-1}g(s).$ Thus, $g$ satisfies
\[
g'(r) + \al\Lambda\left(\frac{r}{R_0}\right)^{\alpha-1}g(r)\geq 0.
\]  
Since
\[
\dfrac{d}{dr}\left(\exp(\Lambda R_0^{1-\al}r^\al)g(r)\right)=\exp(\Lambda R_0^{1-\al}r^\al)\left(\al\Lambda\left(\dfrac{r}{R_0}\right)^{\al-1}g(r) + g'(r)\right)\geq0,
\]
we conclude that $h(r)=\exp(\Lambda R_0^{1-\al}r^\al)g(r)$ is monotone non-decreasing for every $r\in(0,R_0).$
\end{proof}

\begin{proof}[Proof of Corollary \ref{Lp}.] Again, we prove the case $\kappa>0.$ The case $\kappa\leq0$ is entirely analogous. 
Applying inequality (\ref{eq.diff-1}) to $f\equiv 1$ and using that $(\sin\sqrt{\kappa}\rho)$ is a increasing function, we have
\[
\begin{split}
\dfrac{d}{dr}&\left(\frac{1}{(\sin\sqrt{\kappa}r)^{\frac{m-1}{2}}}\int_{M\cap B_r} S_1 dM\right)\\
& \geq \frac{1}{2(\sin\sqrt{\kappa}r)^{\frac{m+1}{2}}}\int_{M\cap B_r}(\sin\sqrt{\kappa}\rho)\left[2S_2\lan\overline{\n}\rho,\eta\ran+ \ric_{\overline{M}}(\n\rho,\eta)\right] dM.\\
&\geq - \frac{1}{(\sin\sqrt{\kappa}r)^{\frac{m-1}{2}}}\int_{M\cap B_r}\left(\dfrac{m(\kappa-\kappa_0)}{4} + S_2\right) dM. \\
\end{split}
\]
By using H\"older inequality and the hypothesis, we obtain
\[
\begin{split}
\int_{M\cap B_r}\left(\dfrac{m(\kappa-\kappa_0)}{4} + S_2\right)& dM \leq \left(\int_{M\cap B_r}\left(\dfrac{m(\kappa-\kappa_0)}{4} + S_2\right)^p dM\right)^{\frac{1}{p}}\left(\int_{M\cap B_r}dM\right)^{1-\frac{1}{p}}\\
&\leq \dfrac{1}{c^{1-\frac{1}{p}}}\left(\int_{M\cap B_{R_0}}\left(\dfrac{m(\kappa-\kappa_0)}{4} + S_2\right)^p dM\right)^{\frac{1}{p}}\left(\int_{M\cap B_r}S_1 dM\right)^{1-\frac{1}{p}} \\
&\leq \frac{\Lambda}{c^{1-\frac{1}{p}}}\left(\int_{M\cap B_r}S_1 dM\right)^{1-\frac{1}{p}} dM.\\
\end{split}
\]
This implies
\[
\dfrac{d}{dr}\left(\frac{1}{(\sin\sqrt{\kappa}r)^{\frac{m-1}{2}}}\int_{M\cap B_r} S_1dM\right)\geq -\frac{\Lambda}{c^{1-1/p}}\frac{1}{(\sin\sqrt{\kappa}r)^{\frac{m-1}{2}}}\left(\int_{M\cap B_r} S_1dM\right)^{1-\frac{1}{p}}.
\]
Thus,
\[
\begin{split}
\dfrac{d}{dr}\left(\left(\frac{1}{(\sin\sqrt{\kappa}r)^{\frac{m-1}{2}}}\int_{M\cap B_r} S_1dM\right)^{\frac{1}{p}}\right)&=\frac{1}{p}\left(\frac{1}{(\sin\sqrt{\kappa}r)^{\frac{m-1}{2}}}\int_{M\cap B_r} S_1dM\right)^{\frac{1}{p}-1}\\
&\qquad\times \dfrac{d}{dr}\left(\frac{1}{(\sin\sqrt{\kappa}r)^{\frac{m-1}{2}}}\int_{M\cap B_r} S_1dM\right)\\
&\geq-\frac{1}{p}\left(\frac{1}{(\sin\sqrt{\kappa}r)^{\frac{m-1}{2}}}\int_{M\cap B_r} S_1dM\right)^{\frac{1}{p}-1}\\
&\qquad\times\frac{\Lambda}{c^{1-1/p}}\frac{1}{(\sin\sqrt{\kappa}r)^{\frac{m-1}{2}}}\left(\!\int_{M\cap B_r} S_1dM\right)^{1-\frac{1}{p}}\\
&=-\frac{\Lambda}{pc^{1-1/p}}\frac{1}{(\sin\sqrt{\kappa}r)^{\frac{m-1}{2p}}}.\\
\end{split}
\]
Integrating inequality above from $s$ to $t,$ we have
\[
\begin{split}
\left(\frac{1}{(\sin\sqrt{\kappa}t)^{\frac{m-1}{2}}}\int_{M\cap B_t} S_1dM\right)^{\frac{1}{p}} &- \left(\frac{1}{(\sin\sqrt{\kappa}s)^{\frac{m-1}{2}}}\int_{M\cap B_s} S_1dM\right)^{\frac{1}{p}}\\
&\geq-\frac{\Lambda}{pc^{1-1/p}}\int_s^t\frac{1}{(\sin\sqrt{\kappa}r)^{\frac{m-1}{2p}}}dr.
\end{split}
\]
\end{proof}

We conclude this paper proving Corollary \ref{teo-self}, p. \pageref{teo-self}.

\begin{proof}[Proof of Corollary \ref{teo-self}.]
Taking $f\equiv1$ in the inequality (\ref{eq.diff-2}), p. \pageref{eq.diff-2}, for $\overline{M}=\R^{m+1},$ using that $M$ is a self-shrinker and the hypothesis $0\leq R\leq\Lambda,$ we have
\begin{equation}\label{self-proof}
\begin{split}
\frac{d}{dr}\left(\frac{1}{r^{\frac{m-1}{2}}}\int_{M\cap B_r}HdM\right) &\geq \frac{m-1}{r^{\frac{m+1}{2}}}\int_{M\cap B_r} R\left(\frac{1}{2}\lan\rho\overline{\n}\rho, \eta\ran\right)dM\\
&=-\frac{m(m-1)}{r^{\frac{m+1}{2}}}\int_{M\cap B_r} RH dM\\
&\geq-\frac{\Lambda m(m-1)}{r^{\frac{m+1}{2}}}\int_{M\cap B_r}HdM.
\end{split}
\end{equation}
Denoting by $\displaystyle{f(r)=\frac{1}{r^{\frac{m-1}{2}}}\int_{M\cap B_r}HdM},$ the inequality (\ref{self-proof}) becomes
\[
f'(r)\geq -\frac{\Lambda m(m-1)}{r}f(r),
\]
which is equivalent to 
\[
\frac{d}{dr}\left(\ln\left(r^{\Lambda m(m-1)}f(r)\right)\right)\geq 0.
\]
This implies that $\ln\left(r^{\Lambda m(m-1)}f(r)\right).$ Therefore $r^{\Lambda m(m-1)}f(r)$ is monotone non-decreasing, i.e.,
\[
\varphi(r)=\frac{1}{r^{(m-1)\left(\frac{1}{2}-\Lambda m\right)}}\int_{M\cap B_r} H dM
\]
is monotone non-decreasing. The monotonicity of the function $\varphi$ implies
\[
\int_{M\cap B_r} H dM \geq r^{(m-1)(\frac{1}{2} - \Lambda m)}\frac{1}{r_0^{(m-1)(\frac{1}{2} - \Lambda m)}}\int_{M\cap r_0} H dM
\]
for $r\geq r_0.$ Therefore, in the case that $0\leq R\leq\Lambda<\frac{1}{2m},$ taking $r\ria\infty$ we obtain $\displaystyle{\int_ M H dM =\infty.}$
\end{proof}

\begin{bibdiv}
\begin{biblist}

\bib{Acosta}{article}{
   author={Acosta, Gabriel},
   author={Dur{\'a}n, Ricardo G.},
   title={An optimal Poincar\'e inequality in $L^1$ for convex domains},
   journal={Proc. Amer. Math. Soc.},
   volume={132},
   date={2004},
   number={1},
   pages={195--202 (electronic)},
   issn={0002-9939},
   review={\MR{2021262 (2004j:26031)}},
   doi={10.1090/S0002-9939-03-07004-7},
}

\bib{AdCS}{article}{
   author={Alencar, Hil{\'a}rio},
   author={do Carmo, Manfredo},
   author={Santos, Walcy},
   title={A gap theorem for hypersurfaces of the sphere with constant scalar
   curvature one},
   journal={Comment. Math. Helv.},
   volume={77},
   date={2002},
   number={3},
   pages={549--562},
   issn={0010-2571},
   review={\MR{1933789 (2003m:53098)}},
   doi={10.1007/s00014-002-8351-1},
}

\bib{A-S-arkiv}{article}{
year={2015},
issn={0004-2080},
journal={Arkiv för Matematik},
doi={10.1007/s11512-015-0213-3},
title={Monotonicity formula for complete hypersurfaces in the hyperbolic space and applications},
url={http://dx.doi.org/10.1007/s11512-015-0213-3},
publisher={Springer Netherlands},
author={Alencar, Hilário},
author={Neto, Gregório Silva},
pages={1-11},
language={English}
}


\bib{walcy}{article}{
   author={Barroso, Cleon S.},
   author={de Lima, Levi L.},
   author={Santos, Walcy},
   title={Monotonicity inequalities for the $r$-area and a degeneracy
   theorem for $r$-minimal graphs},
   journal={J. Geom. Anal.},
   volume={14},
   date={2004},
   number={4},
   pages={557--566},
   issn={1050-6926},
   review={\MR{2111417 (2005j:53062)}},
   doi={10.1007/BF02922169},
}

\bib{burago}{book}{
   author={Burago, Yu. D.},
   author={Zalgaller, V. A.u},
   title={Geometric inequalities,
    Translated from the Russian by A. B. Sosinskii.},
   series={Grundlehren der Mathematischen Wissenschaften },
   volume={285},
   edition={2},
   publisher={ Springer Series in Soviet Mathematics. Springer-Verlag, Berlin},
   date={1988},
   pages={ xiv+331 pp.},
   isbn={3-540-13615-0},
   review={\MR{0936419 (89b:52020)}},
}


\bib{cao-zhou}{article}{
   author={Cao, Huai-Dong},
   author={Zhou, Detang},
   title={On complete gradient shrinking Ricci solitons},
   journal={J. Differential Geom.},
   volume={85},
   date={2010},
   number={2},
   pages={175--185},
   issn={0022-040X},
   review={\MR{2732975 (2011k:53040)}},
}

\bib{Cheng-Zhou}{article}{
   author={Cheng, Xu},
   author={Zhou, Detang},
   title={Manifolds with weighted Poincar\'e inequality and uniqueness of
   minimal hypersurfaces},
   journal={Comm. Anal. Geom.},
   volume={17},
   date={2009},
   number={1},
   pages={139--154},
   issn={1019-8385},
   review={\MR{2495836 (2010e:53075)}},
   doi={10.4310/CAG.2009.v17.n1.a6},
}

\bib{colding-minicozzi}{book}{
   author={Colding, Tobias Holck},
   author={Minicozzi, William P., II},
   title={A course in minimal surfaces},
   series={Graduate Studies in Mathematics},
   volume={121},
   publisher={American Mathematical Society, Providence, RI},
   date={2011},
   pages={xii+313},
   isbn={978-0-8218-5323-8},
   review={\MR{2780140}},
}

\bib{colding}{article}{
   author={Colding, Tobias H.},
   title={New monotonicity formulas for Ricci curvature and applications. I},
   journal={Acta Math.},
   volume={209},
   date={2012},
   number={2},
   pages={229--263},
   issn={0001-5962},
   review={\MR{3001606}},
   doi={10.1007/s11511-012-0086-2},
}

\bib{cold-mini}{article}{
   author={Colding, Tobias H.},
   author={Minicozzi, William P., II},
   title={Ricci curvature and monotonicity for harmonic functions},
   journal={Calc. Var. Partial Differential Equations},
   volume={49},
   date={2014},
   number={3-4},
   pages={1045--1059},
   issn={0944-2669},
   review={\MR{3168621}},
   doi={10.1007/s00526-013-0610-z},
 }

\bib{cold-mini-1}{article}{
   author={Colding, Tobias H.},
   author={Minicozzi, William P., II},
   title={Generic mean curvature flow I: generic singularities},
   journal={Ann. of Math. (2)},
   volume={175},
   date={2012},
   number={2},
   pages={755--833},
   issn={0003-486X},
   review={\MR{2993752}},
   doi={10.4007/annals.2012.175.2.7},
}

\bib{cold-mini-2}{article}{
   author={Colding, Tobias H.},
   author={Minicozzi, William P., II},
   title={Smooth compactness of self-shrinkers},
   journal={Comment. Math. Helv.},
   volume={87},
   date={2012},
   number={2},
   pages={463--475},
   issn={0010-2571},
   review={\MR{2914856}},
   doi={10.4171/CMH/260},
}

\bib{dC}{book}{
   author={do Carmo, Manfredo Perdig{\~a}o},
   title={Riemannian geometry},
   series={Mathematics: Theory \& Applications},
   note={Translated from the second Portuguese edition by Francis Flaherty},
   publisher={Birkh\"auser Boston, Inc., Boston, MA},
   date={1992},
   pages={xiv+300},
   isbn={0-8176-3490-8},
   review={\MR{1138207 (92i:53001)}},
   doi={10.1007/978-1-4757-2201-7},
}
\bib{Ecker1}{article}{
   author={Ecker, Klaus},
   title={A local monotonicity formula for mean curvature flow},
   journal={Ann. of Math. (2)},
   volume={154},
   date={2001},
   number={2},
   pages={503--525},
   issn={0003-486X},
   review={\MR{1865979 (2002m:53106)}},
   doi={10.2307/3062105},
}

\bib{Ecker2}{article}{
   author={Ecker, Klaus},
   title={Local monotonicity formulas for some nonlinear diffusion
   equations},
   journal={Calc. Var. Partial Differential Equations},
   volume={23},
   date={2005},
   number={1},
   pages={67--81},
   issn={0944-2669},
   review={\MR{2133662 (2006a:35147)}},
   doi={10.1007/s00526-004-0290-9},
}

\bib{Evans}{book}{
   author={Evans, Lawrence C.},
   title={Partial differential equations},
   series={Graduate Studies in Mathematics},
   volume={19},
   edition={2},
   publisher={American Mathematical Society, Providence, RI},
   date={2010},
   pages={xxii+749},
   isbn={978-0-8218-4974-3},
   review={\MR{2597943 (2011c:35002)}},
}
\bib{Guan-Li}{article}{
   author={Guan, Pengfei},
   author={Li, Junfang},
   title={A mean curvature type flow in space forms},
   journal={International Mathematics Research Notices},
   volume={2015},
   date={2014},
   number={13},
   pages={4716--4740},
   issn={1073-7928},
   doi={10.1093/imrn/rnu081},
}

\bib{Gruter}{article}{
   author={Gr{\"u}ter, Michael},
   title={The monotonicity formula in geometric measure theory, and an
   application to a partially free boundary problem},
   conference={
      title={Partial differential equations and calculus of variations},
   },
   book={
      series={Lecture Notes in Math.},
      volume={1357},
      publisher={Springer, Berlin},
   },
   date={1988},
   pages={238--255},
   review={\MR{976238 (89m:49079)}},
   doi={10.1007/BFb0082869},
}

\bib{H-S}{article}{
   author={Hoffman, David},
   author={Spruck, Joel},
   title={Sobolev and isoperimetric inequalities for Riemannian
   submanifolds},
   journal={Comm. Pure Appl. Math.},
   volume={27},
   date={1974},
   pages={715--727},
   issn={0010-3640},
   review={\MR{0365424 (51 \#1676)}},
}


\bib{Kombe}{article}{
   author={Kombe, Ismail},
   author={{\"O}zaydin, Murad},
   title={Hardy-Poincar\'e, Rellich and uncertainty principle inequalities
   on Riemannian manifolds},
   journal={Trans. Amer. Math. Soc.},
   volume={365},
   date={2013},
   number={10},
   pages={5035--5050},
   issn={0002-9947},
   review={\MR{3074365}},
   doi={10.1090/S0002-9947-2013-05763-7},
}

\bib{Lam}{article}{
   author={Lam, Kwan-Hang},
   title={Results on a weighted Poincar\'e inequality of complete manifolds},
   journal={Trans. Amer. Math. Soc.},
   volume={362},
   date={2010},
   number={10},
   pages={5043--5062},
   issn={0002-9947},
   review={\MR{2657671 (2011f:53062)}},
   doi={10.1090/S0002-9947-10-04894-4},
}

\bib{L-S}{article}{
   author={Li, Peter},
   author={Schoen, Richard},
   title={$L^p$ and mean value properties of subharmonic functions on
   Riemannian manifolds},
   journal={Acta Math.},
   volume={153},
   date={1984},
   number={3-4},
   pages={279--301},
   issn={0001-5962},
   review={\MR{766266 (86j:58147)}},
   doi={10.1007/BF02392380},
}

\bib{Li}{article}{
   author={Li, Peter},
   author={Wang, Jiaping},
   title={A generalization of Cheng's theorem},
   journal={Asian J. Math.},
   volume={12},
   date={2008},
   number={4},
   pages={519--526},
   issn={1093-6106},
   review={\MR{2481689 (2009k:53083)}},
   doi={10.4310/AJM.2008.v12.n4.a6},
}	

\bib{Li-Wang}{article}{
   author={Li, Peter},
   author={Wang, Jiaping},
   title={Weighted Poincar\'e inequality and rigidity of complete manifolds},
   language={English, with English and French summaries},
   journal={Ann. Sci. \'Ecole Norm. Sup. (4)},
   volume={39},
   date={2006},
   number={6},
   pages={921--982},
   issn={0012-9593},
   review={\MR{2316978 (2008d:53053)}},
   doi={10.1016/j.ansens.2006.11.001},
}

\bib{Li-JF}{article}{
   author={Li, Jun-Fang},
   title={Eigenvalues and energy functionals with monotonicity formulas
   under Ricci flow},
   journal={Math. Ann.},
   volume={338},
   date={2007},
   number={4},
   pages={927--946},
   issn={0025-5831},
   review={\MR{2317755 (2008c:53068)}},
   doi={10.1007/s00208-007-0098-y},
}


\bib{Munteanu}{article}{
   author={Munteanu, Ovidiu},
   title={Two results on the weighted Poincar\'e inequality on complete
   K\"ahler manifolds},
   journal={Math. Res. Lett.},
   volume={14},
   date={2007},
   number={6},
   pages={995--1008},
   issn={1073-2780},
   review={\MR{2357470 (2008k:32069)}},
   doi={10.4310/MRL.2007.v14.n6.a8},
}
\bib{payne-weinberger}{article}{
   author={Payne, L. E.},
   author={Weinberger, H. F.},
   title={ An optimal Poincar\'e inequality for convex domains},
   journal={Arch. Rat. Mech. Anal.},
   volume={5},
   date={1960},
   pages={286--292},
   issn={},
   review={\MR{22:8198}},
}


\bib{Seo}{article}{
   author={Seo, Keomkyo},
   title={Isoperimetric inequalities for submanifolds with bounded mean
   curvature},
   journal={Monatsh. Math.},
   volume={166},
   date={2012},
   number={3-4},
   pages={525--542},
   issn={0026-9255},
   review={\MR{2925153}},
   doi={10.1007/s00605-011-0332-2},
}

\bib{Simon}{book}{
   author={Simon, Leon},
   title={Lectures on geometric measure theory},
   series={Proceedings of the Centre for Mathematical Analysis, Australian
   National University},
   volume={3},
   publisher={Australian National University, Centre for Mathematical
   Analysis, Canberra},
   date={1983},
   pages={vii+272},
   isbn={0-86784-429-9},
   review={\MR{756417 (87a:49001)}},
}

\bib{Urbas}{article}{
   author={Urbas, John},
   title={Monotonicity formulas and curvature equations},
   journal={J. Reine Angew. Math.},
   volume={557},
   date={2003},
   pages={199--218},
   issn={0075-4102},
   review={\MR{1978408 (2004d:53038)}},
   doi={10.1515/crll.2003.031},
}

\end{biblist}
\end{bibdiv}

\end{document}